\renewcommand{\@seccntformat}[1]{\csname the#1\endcsname.}
\begin{document}
\newtheoremstyle{mytheorem}
  {\topsep}   
  {\topsep}   
  {\itshape}  
  {}       
  {\bfseries} 
  {. }         
  {5pt plus 1pt minus 1pt} 
  { }          
\newtheoremstyle{myremark}
  {\topsep}   
  {\topsep}   
  {\upshape}  
  {}       
  {\itshape} 
  {. }         
  {5pt plus 0pt minus 1pt} 
  {}          
\theoremstyle{mytheorem}
\newtheorem*{A}{Li--Zheng theorem}
\newtheorem*{problem}{Problem}
\newtheorem{theorem}{Theorem}[section]
 \newtheorem{theorema}{Theorem}
\newtheorem{proposition}[theorem]{Proposition} 
 \newtheorem{lemma}[theorem]{Lemma}
\newtheorem{corollary}[theorem]{Corollary}
\newtheorem{definition}[theorem]{Definition}
\theoremstyle{myremark}
\newtheorem{remark}[theorem]{Remark}

\begin{Large}
\noindent{\bf On the Li--Zheng theorem}

\bigskip

\title{On the Li--Zheng theorem}

\bigskip

\noindent{Gennadiy Feldman} 
\end{Large}
\bigskip

\noindent {\bf  Abstract.}  By the well-known
 I.~Kotlarski lemma,
if $\xi_1$, $\xi_2$, and $\xi_3$
are independent real-valued random
variables with nonvanishing characteristic 
functions, $L_1=\xi_1-\xi_3$ and  $L_2=\xi_2-\xi_3$, then 
 the distribution of the random vector  $(L_1, L_2)$ determines 
  the 
distributions of the random variables $\xi_j$ up
 to shift.  Siran Li and  Xunjie Zheng generalized 
 this result for the linear forms $L_1=\xi_1+a_2\xi_2+a_3\xi_3$ and $L_2=b_2\xi_2+b_3\xi_3+\xi_4$ assuming that all   
 $\xi_j$ have 
 first and second moments, $\xi_2$ and $\xi_3$ are identically distributed, 
  and $a_j$, $b_j$ satisfy some conditions.
 In the article, we give a simpler proof of this theorem. 
 In doing so, we also prove that 
the condition of existence of moments 
can be omitted. Moreover, we
prove an analogue of the Li--Zheng theorem for independent random
variables with values in the field 
of $p$-adic numbers, in the field of integers modulo $p$, where $p\ne 2$, and 
in the discrete field of rational numbers.
     
\bigskip

\noindent {\bf Mathematics Subject Classification.} 39B52, 39A60, 60E05.

\bigskip

\noindent{\bf Keywords.} Functional equation, Kotlarski lemma, Rao theorem, 
Field of $p$-adic numbers.

\section { Introduction}

In the article \cite{K} dedicated to the characterization of the gamma and
		the Gaussian distribution I.~Kotlarski proved the following lemma:
Let $\xi_1$, $\xi_2$, and $\xi_3$
 be independent real-valued random
variables with nonvanishing characteristic 
functions,  and let
$L_1=\xi_1-\xi_3$ and  $L_2=\xi_2-\xi_3$. 
Then the distribution of the random vector  $(L_1, L_2)$ determines the 
distributions  of the random variables $\xi_j$ up
 to shift.	The characterization theorems proved in \cite{K} 
 remained known only to mathematicians who deal with characterization problems of mathematical statistics. At the same time, a number of studies that are 
 far from characterization problems are based on Kotlarski's lemma. See, e.g.,
 \cite{L-Zh},  where  numerous articles with references to Kotlarski's 
 lemma including
 in economics literature, are mentioned. We especially pay attention 
 to the important article by C.R.~Rao
 \cite{CRR}. In particular, he considered $n$ independent real-valued 
 random variables $\xi_j$ 
 with nonvanishing characteristic 
functions,
 the linear forms $L_1=a_1\xi_1+\dots+a_n\xi_n$,  $L_2=b_1\xi_1+\dots+b_n\xi_n$
 and proved that under some natural conditions on the coefficients  $a_j$,
 $b_j$ the characteristic  function of the random vector  $(L_1, L_2)$ determines the
  characteristic  functions of the random variables 
  $\xi_j$ up to  factors of the form $\exp\{P_j(y)\}$, 
  where $P_j(y)$ is a polynomial of degree at most $n-2$. Kotlarski's lemma follows
  from this Rao theorem. Note also that some generalizations of Rao's theorem
  were studied  in \cite{F}, see also \cite[\S 15]{F1}, for locally compact Abelian groups. The proof of  Rao's theorem is based on the following statement on solutions of a functional equation.
    \begin{lemma}\label{le1}    
Let $a_j$, $b_j$, $j=1,2, \dots,n$,   be nonzero real numbers such that
  $a_ib_j\ne a_jb_i$  for all $i\ne j$.  Consider the  equation
$$
    \sum_{j = 1}^{n}  \psi_j(a_ju + b_j v ) = A(u)+B(v),
\quad u, v \in \mathbb{R},
$$ 
where $\psi_j(y)$, $A(y)$, and $B(y)$   are continuous complex-valued 
functions on $\mathbb{R}$. 
Then $\psi_j(y)$ are polynomial on $\mathbb{R}$ of degree at most $n$. 
\end{lemma}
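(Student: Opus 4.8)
The plan is to isolate each summand $\psi_k$ by means of finite-difference (translation) operators adapted to the linear forms, and then to recognize the resulting equation as a Cauchy-type equation. For $w=(w_1,w_2)\in\mathbb R^2$ write $(\Delta_w f)(u,v)=f(u+w_1,v+w_2)-f(u,v)$, and set $\ell_j(u,v)=a_ju+b_jv$. The crucial observation is that the vector $e_j=(b_j,-a_j)$ satisfies $\ell_j(e_j)=0$, so for every $t\in\mathbb R$ the operator $\Delta_{te_j}$ annihilates the term $\psi_j(\ell_j)$, while it shifts a different form $\ell_i$ ($i\ne j$) by $t\,\ell_i(e_j)=t(a_ib_j-a_jb_i)$. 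By the hypothesis $a_ib_j\ne a_jb_i$ this increment is nonzero whenever $i\ne j$ and $t\ne0$.

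First I would fix $k$ and apply to the equation the composition $\Delta_{t_{1}e_{j_1}}\cdots\Delta_{t_{n-1}e_{j_{n-1}}}$, where $\{j_1,\dots,j_{n-1}\}=\{1,\dots,n\}\setminus\{k\}$ and the $t_i$ are free real parameters. These operators are translations, hence commute, and each kills the corresponding summand $\psi_{j_i}(\ell_{j_i})$; therefore every term with $j\ne k$ disappears. On the left only $\psi_k$ survives, transformed into $\phi_k(\ell_k)$, where $\phi_k=\Delta_{s_1}\cdots\Delta_{s_{n-1}}\psi_k$ is an ordinary $(n-1)$-st order difference of $\psi_k$ with steps $s_i=t_i(a_kb_{j_i}-a_{j_i}b_k)$. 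Because each $a_kb_{j_i}-a_{j_i}b_k\ne0$ and the $t_i$ are arbitrary, the steps $s_1,\dots,s_{n-1}$ range independently over all of $\mathbb R$. On the right, each operator shifts $u$ only (in $A$) or $v$ only (in $B$), so the right-hand side keeps the separated form $\widetilde A(u)+\widetilde B(v)$ with $\widetilde A,\widetilde B$ continuous.

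Thus for all $s_1,\dots,s_{n-1}$ we obtain $\phi_k(a_ku+b_kv)=\widetilde A(u)+\widetilde B(v)$. Setting $v=0$ and then $u=0$ and substituting back, and using $a_k,b_k\ne0$, this collapses (after subtracting the value at the origin) to the Cauchy equation $\eta(s+t)=\eta(s)+\eta(t)$ for $\eta(x)=\phi_k(x)-\phi_k(0)$; continuity forces $\eta$ to be linear, so $\phi_k$ is affine. Hence $\Delta_{s_1}\cdots\Delta_{s_{n-1}}\psi_k$ has degree at most $1$ in its argument for every choice of steps, and applying two further differences gives $\Delta_{s_1}\cdots\Delta_{s_{n+1}}\psi_k=0$ identically in $s_1,\dots,s_{n+1}$. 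By the classical theorem of Fréchet, a continuous function all of whose $(n+1)$-st order finite differences vanish is a polynomial of degree at most $n$, which is exactly the assertion. I expect the main point requiring care to be the second step: verifying that the chosen operators really do annihilate all unwanted summands while leaving the right-hand side separated, and---most importantly---exploiting the non-proportionality condition together with the scaling parameters $t_i$ to make the surviving steps $s_i$ arbitrary, since this is precisely what upgrades the fixed-step difference identity into the hypothesis of Fréchet's theorem.
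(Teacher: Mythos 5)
Your proof is correct. Note that the paper itself gives no proof of Lemma \ref{le1}: it declares the lemma well known and refers to \cite[\S 1.5]{KaLiRa}, so there is nothing to deviate from. Your argument --- killing the unwanted summands with translations along the null directions $(b_j,-a_j)$ of the linear forms, observing that the nondegeneracy condition $a_ib_j\ne a_jb_i$ makes the induced one-dimensional steps $s_i$ arbitrary, reducing the surviving term to a continuous Cauchy equation, and concluding with Fr\'echet's theorem --- is the standard finite-difference proof, and it coincides with the technique the paper itself uses for the analogous $p$-adic equation in item 1 of the proof of Theorem \ref{th2}, where the substitution of $u-b_3g$ for $u$ and $v+g$ for $v$ is precisely your operator $\Delta_{-g e_3}$ with $e_3=(b_3,-a_3)$, $a_3=1$.
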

This lemma is well known. In fact, the lemma was first used, although it was not explicitly formulated, in the works by Skitovich and Darmois, where the Gaussian distribution on the real line is characterized by the independence of two 
linear forms of $n$ independent random variables. Two different proofs of the lemma 
and some generalizations can be found in \cite[\S 1.5]{KaLiRa}.

In article \cite{L-Zh}, Siran Li and  Xunjie Zheng proved the 
following statement.

\begin{A}
Let U and V be random variables with 
nonvanishing characteristic functions. Assume that
$$U = X + aZ_1 + bZ_2, \quad V = Y + cZ_1 + dZ_2,
$$
where $X$, $Y$, $Z_1$, and $Z_2$ are independent random variables 
with well-defined first and second moments, $Z_1$ and $Z_2$ are identically
distributed, and $a$, $b$, $c$, and $d$ are nonzero real constants 
which are known. Suppose that $ac \ne -bd$ and $(a, c) \ne (-b, -d)$.
Then, the joint distribution of $(U, V)$ uniquely determines 
the distributions of $X$, $Y$, $Z_1$, and $Z_2$ up to a change of
location.
\end{A}

Our article consists of three parts. In the first part,
we give a simpler proof of the Li--Zheng theorem and
 show that it essentially follows 
from Lemma \ref{le1}. In doing so, we also prove that 
the condition  of existence of moments of the random 
variables $X$, $Y$, $Z_1$, and $Z_2$   
can be omitted. In the second part, we
prove an analogue of the Li--Zheng theorem for independent random
variables with values in  the field 
of $p$-adic numbers. 
In the third part, we prove an analogue of the Li--Zheng theorem 
for independent random
variables with values in the field of integers modulo 
$p$, where $p\ne 2$,  and 
in the discrete field of rational numbers.
For the proof of the corresponding theorems we 
  solve some functional equations on the character group
of the additive group of the field.
		
\section { Real-valued random variables} 
 
Let us formulate the Li--Zheng theorem in more familiar for us notation. 
In doing so, we omit
the condition  of  existence of moments of independent random 
variables.

\begin{theorem}\label{th1}  Let $\xi_1$,  $\xi_2$,  $\xi_3$, and 
$\xi_4$ be independent
real-valued random variables with nonvanishing characteristic functions. 
Let $a_j$, $b_j$, $j=2,3$,   be nonzero real numbers such that
  $a_2b_2\ne -a_3b_3$ and $(a_2, b_2) \ne (-a_3, -b_3)$.
Consider the linear forms $L_1=\xi_1+a_2\xi_2+a_3\xi_3$ 
and $L_2=b_2\xi_2+b_3\xi_3+\xi_4$. 
If the random variables $\xi_2$ and  $\xi_3$ are identically 
distributed, then the distribution of 
the random vector $(L_1, L_2)$  determines 
  the 
distributions of  the random variables 
$\xi_j$, $j=1, 2, 3, 4$, up to shift. 
\end{theorem}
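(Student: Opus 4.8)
The plan is to translate the problem into a functional equation for characteristic functions and then apply Lemma~\ref{le1}. Denote by $\hat\mu_j(y)$ the characteristic function of $\xi_j$, and write $\varphi_j(y)=\log\hat\mu_j(y)$, which is well-defined and continuous in a neighbourhood of $0$ because $\hat\mu_j$ is nonvanishing and continuous with $\hat\mu_j(0)=1$. The statement that the distribution of $(L_1,L_2)$ is determined amounts to the following: if $\xi_1',\dots,\xi_4'$ is a second collection of independent random variables, with $\xi_2'$ and $\xi_3'$ identically distributed, giving linear forms $L_1',L_2'$ with the same joint distribution as $(L_1,L_2)$, then each $\xi_j'$ has the same distribution as $\xi_j$ up to a shift. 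Writing the characteristic function of $(L_1,L_2)$ in terms of the $\hat\mu_j$ and equating it with the corresponding expression for the primed variables, I would take logarithms near the origin and set $\nu_j=\varphi_j-\varphi_j'$ for $j=1,\dots,4$, where $\varphi_j'$ is the log-characteristic function of $\xi_j'$. This yields an equation of the form
$$
\nu_1(u)+\nu_2(a_2u+b_2v)+\nu_3(a_3u+b_3v)+\nu_4(v)=0,\qquad u,v\in\mathbb{R},
$$
valid in a neighbourhood of the origin, where I have used that the coefficient of $\xi_1$ in $L_1$ and of $\xi_4$ in $L_2$ is $1$ and the coefficient of $\xi_1$ in $L_2$ (and of $\xi_4$ in $L_1$) is $0$.

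Next I would recast this as an instance of the functional equation in Lemma~\ref{le1}. With the identifications $\psi_1=\nu_1$ (coefficients $a_1=1$, $b_1=0$), $\psi_2=\nu_2$ (coefficients $a_2,b_2$), $\psi_3=\nu_3$ (coefficients $a_3,b_3$), and $\psi_4=\nu_4$ (coefficients $a_4=0$, $b_4=1$), the left-hand side has exactly the shape $\sum_{j=1}^{4}\psi_j(a_ju+b_jv)$, while the right-hand side is $0=A(u)+B(v)$ with $A=B=0$. The nondegeneracy hypotheses $a_ib_j\ne a_jb_i$ for $i\ne j$ must now be checked against the given conditions: the pairs involving the $0$ coefficients give $a_1b_4-a_4b_1=1\ne0$ and similar nonzero $2\times2$ minors, the condition $a_2b_2\ne-a_3b_3$ is what prevents the degeneracy $a_2b_3=a_3b_2$ after accounting for the relevant signs, and $(a_2,b_2)\ne(-a_3,-b_3)$ rules out the remaining proportionality. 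Granting the nondegeneracy, Lemma~\ref{le1} forces each $\nu_j$ to be a polynomial (of degree at most $4$) on $\mathbb{R}$.

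The final step is to upgrade ``$\nu_j$ is a polynomial'' to ``$\nu_j$ is linear,'' i.e.\ to conclude that the two sets of characteristic functions differ only by factors $\exp\{i\beta_j y\}$, which is exactly the ``up to shift'' conclusion. Here I would exploit the structural constraints that $\nu_j$ comes from a difference of log-characteristic functions: since $\hat\mu_j$ and $\hat\mu_j'$ are characteristic functions, $|\hat\mu_j(y)|\le1$, so $\mathrm{Re}\,\varphi_j\le0$ and likewise for $\varphi_j'$; a polynomial $\nu_j$ whose real part is bounded on $\mathbb{R}$ (being a difference of two functions with nonpositive real part, each of which cannot grow too fast) must in fact have degree at most $1$. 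The identical-distribution hypothesis $\xi_2\stackrel{d}{=}\xi_3$ (and the assumed same property for the primed family) should be brought in to pin down the relation between $\nu_2$ and $\nu_3$ and thereby close the argument for those two forms. I expect the main obstacle to be this last reduction from polynomial to linear: verifying carefully that the real parts of the $\nu_j$ are genuinely bounded, and handling the interplay between $\nu_2$ and $\nu_3$ imposed by the equal-distribution assumption, rather than the application of Lemma~\ref{le1} itself, which is essentially mechanical once the nondegeneracy of the coefficient matrix is confirmed.
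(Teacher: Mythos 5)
Your proposal has two genuine gaps, and they concern precisely the two places where the theorem's hypotheses actually do their work. First, the nondegeneracy check is wrong: the hypotheses $a_2b_2\ne -a_3b_3$ and $(a_2,b_2)\ne(-a_3,-b_3)$ do \emph{not} imply the condition $a_2b_3\ne a_3b_2$ that Lemma~\ref{le1} requires. Take $(a_2,b_2)=(1,1)$ and $(a_3,b_3)=(2,2)$: both hypotheses of the theorem hold, yet $a_2b_3=a_3b_2$, so Lemma~\ref{le1} is simply unavailable. This degenerate case cannot be waved away; the paper devotes a separate argument to it (item 3 of its proof): writing $c=b_2/a_2=b_3/a_3$ and $\psi=\psi_2=\psi_3$, the equation collapses to a Cauchy-type equation for $\varphi(y)=\psi(a_2y)+\psi(a_3y)$, and then one shows $\gamma(y)=\psi(y)-by$ satisfies $\gamma(y)=(-1)^n\gamma(k^ny)$ with $k=a_2/a_3$, where $(a_2,b_2)\ne(-a_3,-b_3)$ guarantees $k\ne-1$, so that either $a_2=a_3$ or $|k|\ne1$ and an iteration-plus-continuity argument kills $\gamma$. (A smaller, fixable issue in the same step: Lemma~\ref{le1} as stated requires all coefficients nonzero, so your $n=4$ application with coefficient pairs $(1,0)$ and $(0,1)$ is not licit; the correct move, which is the paper's, is to put $A(u)=-\psi_1(u)$, $B(v)=-\psi_4(v)$ on the right and apply the lemma with $n=2$ to $\psi_2,\psi_3$ alone.)

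Second, your proposed reduction from ``polynomial'' to ``linear'' does not work. It is false that a difference of two log-characteristic functions must have bounded real part: if $\hat\mu_j(y)=e^{-y^2/2}$ and $\hat\mu_j'(y)=e^{-y^2}$, then $\nu_j(y)=y^2/2$ is a genuine quadratic. Ruling out quadratic (i.e.\ Gaussian) factors is exactly what Rao's theorem cannot do in general, and it is here that both remaining hypotheses enter, in roles different from the ones you assign them. The correct mechanism is to substitute the quadratic polynomials $\psi_j(y)=\sigma_jy^2+\beta_jy$ back into the functional equation and read off the coefficient of $uv$, which gives $\sigma_2a_2b_2+\sigma_3a_3b_3=0$; the identical-distribution hypothesis on $\xi_2,\xi_3$ (and on the second family) gives $\psi_2=\psi_3$, hence $\sigma_2=\sigma_3$; and then $a_2b_2+a_3b_3\ne0$ forces $\sigma_2=\sigma_3=0$, after which $\sigma_1=\sigma_4=0$ follows from the same identity. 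So $a_2b_2\ne-a_3b_3$ is not, as you suggest, a nondegeneracy condition for the lemma --- it is the condition that annihilates the quadratic terms. Finally, to get ``shift'' rather than multiplication by $e^{\beta_jy}$ with $\beta_j$ complex, you still need the hermitian symmetry $f_j(-y)=\overline{f_j(y)}$ to conclude that the surviving linear coefficients are purely imaginary; this step is absent from your sketch.
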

\begin{proof} 1. Let $\eta_1$,  $\eta_2$,  $\eta_3$, and $\eta_4$ 
be independent real-valued random variables with nonvanishing 
characteristic functions.
Assume that
$\eta_2$ and  $\eta_3$ are identically distributed.
 Denote by $\mu_j$ and $\nu_j$ the distributions of the random variables   
 $\xi_j$  and $\eta_j$ and by $\hat\mu_j(y)$ and $\hat\nu_j(y)$ their
 characteristic functions.
 Put $M_1=\eta_1+a_2\eta_2+a_3\eta_3$ and  $M_2=b_2\eta_2+b_3\eta_3+\eta_4$. 
Suppose that the distributions of the random vectors   $(L_1, L_2)$ 
 and $(M_1, M_2)$ coincide. Taking into account that the random variables 
 $\xi_j$ are 
independent and $\hat\mu_j(y)=\textbf{E}\left[e^{i\xi_jy}\right]$, 
the characteristic   function of the random 
vector   $(L_1, L_2)$  can be represented in the   form
\begin{multline}\label{30_03_2}
\textbf{E}\left[e^{i(L_1u+L_2v)}\right]=\textbf{E}
\left[e^{i((\xi_1+a_2\xi_2+a_3\xi_3)u+(b_2\xi_2+b_3\xi_3+\xi_4)v)}\right]\\ 
=\textbf{E}\left[e^{i\xi_1u}e^{i\xi_2(a_2u+b_2v)}e^{i\xi_3(a_3u+b_3v)}
e^{i\xi_4v}\right]
=\textbf{E}\left[e^{i\xi_1u}\right]\textbf{E}\left[e^{i\xi_2(a_2u+b_2v)}\right]
\textbf{E}\left[e^{i\xi_3(a_3u+b_3v)}\right]\\ 
\times\textbf{E}\left[e^{i\xi_4v}\right]
=\hat\mu_1(u)
\hat\mu_2(a_2u+b_2v)\hat\mu_3(a_3u+b_3v)\hat\mu_4(v), 
\quad u, v\in \mathbb{R}.
\end{multline}
Analogically, the characteristic   function of the random 
vector   $(M_1, M_2)$  is of the   form
\begin{equation}\label{30_03_3}
\textbf{E}\left[e^{i(M_1u+M_2v)}\right]=\hat\nu_1(u)
\hat\nu_2(a_2u+b_2v)
\hat\nu_3(a_3u+b_3v)\hat\nu_4(v), \quad u, v\in \mathbb{R}.
\end{equation}
It follows from  (\ref{30_03_2})  and (\ref{30_03_3}) that
the random vectors $(L_1, L_2)$ and $(M_1, M_2)$ have the same 
characteristic functions and hence  they are identically distributed 
if and only if  the 
characteristic functions $\hat\mu_j(y)$ and $\hat\nu_j(y)$ 
satisfy the equation
\begin{equation}\label{30_03_4}
\hat\mu_1(u)\hat\mu_2(a_2u+b_2v)
\hat\mu_3(a_3u+b_3v)\hat\mu_4(v)=
\hat\nu_1(u)
\hat\nu_2(a_2u+b_2v)\hat\nu_3(a_3u+b_3v)\hat\nu_4(v), 
\quad u, v\in \mathbb{R}.
\end{equation} 
Set  
  \begin{equation}\label{30_03_1}
f_j(y)=\hat\nu_j(y)/\hat\mu_j(y), \quad \psi_j(y)=\ln f_j(y), 
\quad   j=1, 2, 3, 4.
\end{equation} 
Since the characteristic functions $\hat\mu_j(y)$ and $\hat\nu_j(y)$ 
do not vanish, (\ref{30_03_4}) is equivalent to the fact that 
the functions $f_j(y)$ satisfy the equation
\begin{equation}\label{30_03_5}
f_1(u)f_2(a_2u+b_2v)
f_3(a_3u+b_3v)f_4(v)=1, \quad u, v\in \mathbb{R}.
\end{equation}

Note that we obtained  equation (\ref{30_03_5}) without assuming that
the random variables $\xi_2$ and  $\xi_3$ 
and also $\eta_2$ and  $\eta_3$ are identically distributed.

It follows from  (\ref{30_03_5}) that the functions  $\psi_j(y)$ 
  satisfy the equation
\begin{equation}\label{30_03_6}
\psi_1(u)+\psi_2(a_2u+  b_2v)
+\psi_3(a_3u+  b_3v)+\psi_4(v)=0, 
\quad u, v\in \mathbb{R}.
\end{equation}  
Rewrite this equation in the form 
\begin{equation}\label{01_04_1}
\psi_2(a_2u+  b_2v)
+\psi_3(a_3u+  b_3v)=A(u)+B(v), 
\quad u, v\in \mathbb{R}.
\end{equation}

2. First assume that  $a_2b_3\ne a_3b_2$. Then by Lemma \ref{le1}, 
the functions 
$\psi_2(y)$ and $\psi_3(y)$
are polynomial of degree at most 2. Taking into account 
that $\psi_2(0)=\psi_3(0)=0$, we have
\begin{equation}\label{30_03_7}
\psi_j(y)=\sigma_jy^2+\beta_jy, \quad y\in \mathbb{R}, \ 
j=2, 3,
\end{equation}
where $\sigma_j$, $\beta_j$ are complex numbers. Substituting 
(\ref{30_03_7}) into (\ref{30_03_6}) and  setting first $v=0$ and then
$u=0$ in the obtained equation, we infer 
\begin{equation}\label{02_04_1}
\psi_j(y)=\sigma_jy^2+\beta_jy, \quad y\in \mathbb{R}, \ 
j=1, 4,
\end{equation}
where $\sigma_j$, $\beta_j$ are complex numbers. Substitute
(\ref{30_03_7}) and (\ref{02_04_1}) into (\ref{30_03_6}).
We get from the received equation
\begin{equation}\label{30_03_8}
\sigma_1u^2+\sigma_2(a_2u+b_2v)^2+\sigma_3(a_3u+b_3v)^2+\sigma_4v^2=0, 
\quad u, v\in \mathbb{R}.
\end{equation}
It follows from (\ref{30_03_8}) that 
\begin{equation}\label{30_03_9}
\sigma_2a_2b_2+\sigma_3a_3b_3=0.
\end{equation}
 Since  $\xi_2$ and $\xi_3$ are 
 identically distributed and $\eta_2$ 
 and  $\eta_3$ are also identically distributed, we 
 have $\hat\mu_2(y)=\hat\mu_3(y)$ and 
$\hat\nu_2(y)=\hat\nu_3(y)$, $y\in \mathbb{R}$. Hence 
$\psi_2(y)=\psi_3(y)$, $y\in \mathbb{R}$. This implies in 
particular that 
$\sigma_2=\sigma_3$. Inasmuch as $a_2b_2+a_3b_3\ne 0$ by 
the conditions of the theorem, we get from  
(\ref{30_03_9})  that 
$\sigma_2=\sigma_3=0$. Then it follows from (\ref{30_03_8}) that 
$\sigma_1=\sigma_4=0$. Thus 
\begin{equation}\label{01_04_3}
\psi_j(y)=\beta_jy, \quad y \in \mathbb{R}, \ j=1, 2, 3, 4.
\end{equation}
 In view of (\ref{30_03_1}), 
$\psi_j(-y)=\overline{\psi_j(y)}$ for all $y \in \mathbb{R}$ and
(\ref{01_04_3}) implies that 
$\beta_j=i\alpha_j$, where $\alpha_j$ are real numbers. 
Hence $f_j(y)=e^{i\alpha_j y}$. Thus we proved that
\begin{equation*} 
\hat\nu_j(y)=\hat\mu_j(y)e^{i\alpha_j y}, \quad y \in \mathbb{R}.
\end{equation*}
It follows from this that
\begin{equation*} 
\nu_j=\mu_j*E_{\alpha_j}, \quad j=1, 2, 3, 4,
\end{equation*}
where $E_{\alpha_j}$ is the degenerate distribution concentrated at the point 
$\alpha_j$. So, if $a_2b_3\ne a_3b_2$, the theorem is proved. 

3. Assume now that $a_2b_3=a_3b_2$. In this case, we can not apply 
Lemma \ref{le1} 
for solving equation  (\ref{01_04_1}), but  equation (\ref{01_04_1}) 
can be easily solved directly.  

  Since $\xi_2$ and $\xi_3$ are identically distributed and $\eta_2$ 
  and  $\eta_3$ are also identically distributed, we have 
  $\psi_2(y)=\psi_3(y)$, $y\in\mathbb{R}$. 
  Put \begin{equation}\label{13.1}
  \psi(y)=\psi_2(y)=\psi_3(y).
  \end{equation}
 In view of  $a_2b_3=a_3b_2$, set $c=b_2/a_2=b_3/a_3$. 
 Inasmuch as $a_2u+b_2v=a_2(u+cv)$ and
$a_3u+b_3v=a_3(u+cv)$,
 it is easy to see that equation (\ref{01_04_1}) can be rewritten 
in the form
\begin{equation}\label{30_03_10}
\psi(a_2(u+cv))+\psi(a_3(u+cv))=\psi(a_2u)+\psi(a_3u)+\psi(a_2cv)+\psi(a_3cv), 
\quad u, v\in \mathbb{R}.
\end{equation}

Let $a_2=a_3$. Then (\ref{30_03_10}) implies that $\psi(y)$ is a  
homogeneous linear function. If we consider this fact, it follows from 
(\ref{30_03_6}) and (\ref{13.1}) that  $\psi_1(y)$ and $\psi_4(y)$ are also 
homogeneous linear functions, i.e., (\ref{01_04_3})  is fulfilled. 
As noted in the proof of the final part
of item 2, 
the statement of the theorem follows from this. 
 
Let $a_2\ne a_3$. Put $$\varphi(y)=\psi(a_2y)+\psi(a_3y).$$ 
It follows from (\ref{30_03_10}) 
that the function $\varphi(y)$ satisfies the equation
\begin{equation*} 
\varphi(u+v)=\varphi(u)+\varphi(v), 
\quad u, v\in \mathbb{R}.
\end{equation*}
Hence there is a complex number $a$ such that 
$\varphi(y)=a y$ for all $y\in \mathbb{R}$. Consider the function
\begin{equation}\label{01_04_5}
\gamma(y)=\psi(y)-by, \quad y\in \mathbb{R},
\end{equation}
where $b=\frac{a}{a_2+a_3}$. Then we have
$$
\gamma(a_2y)+\gamma(a_3y)=\psi(a_2y)-ba_2y+\psi(a_3y)-ba_3y=\varphi(y)-a y=0.
$$
This implies that
\begin{equation}\label{nn1}
\gamma(y)=-\gamma(ky), \quad y\in \mathbb{R},
\end{equation}
where $k=\frac{a_2}{a_3}$. 
Since $a_2b_3=a_3b_2$ 
and $(a_2, b_2) \ne (-a_3, -b_3)$, we have $k\ne -1$. 
By the condition, $k\ne 1$. For this reason $|k|\ne 1$.
Suppose for definiteness that $|k|<1$. We get from (\ref{nn1})
\begin{equation}\label{n1}
\gamma(y)=(-1)^n\gamma(k^ny), \quad y\in \mathbb{R}, \ n=1, 2, \dots 
\end{equation}
Obviously, $k^ny\rightarrow 0$ as $n\rightarrow\infty$ for all $y\in \mathbb{R}$. 
Taking  into account that $\gamma(y)$ is a continuous function and 
$\gamma(0)=0$, it follows from (\ref{n1}) that $\gamma(y)=0$ for all
$y\in \mathbb{R}$. Hence (\ref{01_04_5}) implies that $\psi(y)$ is a  
homogeneous linear function. Then it follows from 
(\ref{30_03_6}) that $\psi_1(y)$ and $\psi_4(y)$ are also 
homogeneous linear functions, i.e.,
(\ref{01_04_3}) is fulfilled. As noted above, the statement of the 
theorem follows from this. The theorem is completely proved.
 \end{proof}
 
Let $a_j$, $b_j$, $j=2,3$,   be nonzero real numbers. It is obvious that 
if $a_2b_3\ne a_3b_2$, then $(a_2, b_2) \ne (-a_3, -b_3)$.
Assume that $a_2b_3=a_3b_2$. 
This implies that $a_2b_2\ne -a_3b_3$, and the condition $(a_2, b_2) \ne (-a_3, -b_3)$
is equivalent to the condition that either $|a_2|\ne |a_3|$ or $a_2=a_3$.
Taking this into account, Theorem \ref{th1}
  can be reformulated as follows (compare below with Theorems 
  \ref{th2}, \ref{th4}, and \ref{th5}).
 \begin{theorem}\label{th3}
Let $\xi_1$,  $\xi_2$,  $\xi_3$, and $\xi_4$ be independent
 random variables with values in $\mathbb{R}$ with 
 nonvanishing characteristic functions. 
Suppose that the random variables
$\xi_2$ and  $\xi_3$ are identically  distributed. 
Let $a_j$, $b_j$, $j=2,3$,   be nonzero real numbers. 
Consider the linear forms $L_1=\xi_1+a_2\xi_2+a_3\xi_3$ and 
$L_2=b_2\xi_2+b_3\xi_3+\xi_4$. 
Assume that one of the following conditions holds:
\renewcommand{\labelenumi}{\rm(\Roman{enumi})}
\begin{enumerate} 
\item
 $a_2b_3\ne a_3b_2$ and $a_2b_2\ne -a_3b_3$; 
 \item
$a_2b_3=a_3b_2$ and $|a_2|\ne |a_3|$;
\item
$a_2b_3=a_3b_2$ and $a_2=a_3$.
\end{enumerate} 
Then  the distribution of 
the random vector $(L_1, L_2)$  determines 
  the 
distributions of the random variables 
$\xi_j$, $j=1, 2, 3, 4$, up to shift. 
\end{theorem}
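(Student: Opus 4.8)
The plan is to derive Theorem \ref{th3} from Theorem \ref{th1}, since the two statements share every hypothesis except the way the admissible coefficients are described. Concretely, I would show that the disjunction of the three alternatives (I)--(III) is logically equivalent to the pair of conditions $a_2b_2\ne -a_3b_3$ and $(a_2,b_2)\ne(-a_3,-b_3)$ appearing in Theorem \ref{th1}. Once this equivalence is in hand, Theorem \ref{th3} is immediate: under either description of the coefficients the same random variables satisfy the same hypotheses (independence, nonvanishing characteristic functions, and $\xi_2$, $\xi_3$ identically distributed), so the already established conclusion of Theorem \ref{th1} transfers verbatim.

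To establish the equivalence I would split on whether or not $a_2b_3=a_3b_2$, which is exactly the dichotomy used in the proof of Theorem \ref{th1}. In the case $a_2b_3\ne a_3b_2$ I would first observe that $(a_2,b_2)=(-a_3,-b_3)$ would force $a_2b_3=a_3b_2$; hence, when $a_2b_3\ne a_3b_2$, the condition $(a_2,b_2)\ne(-a_3,-b_3)$ is automatic, and the hypotheses of Theorem \ref{th1} collapse to the single requirement $a_2b_2\ne -a_3b_3$, which is precisely (I). In the case $a_2b_3=a_3b_2$ I would introduce the common ratio $c=a_2/a_3=b_2/b_3$ and compute $a_2b_2=c^2a_3b_3$; since $c$ is real, $c^2=-1$ is impossible, so $a_2b_2\ne -a_3b_3$ holds automatically and only the second condition $(a_2,b_2)\ne(-a_3,-b_3)$, i.e. $c\ne -1$, remains. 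Because $|a_2|=|a_3|$ is equivalent to $|c|=1$, the inequality $c\ne -1$ holds precisely when $|a_2|\ne|a_3|$ (so $c\ne\pm1$) or $a_2=a_3$ (so $c=1$), which are alternatives (II) and (III).

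Assembling the two cases shows that (I)--(III) partition exactly the coefficient tuples allowed by Theorem \ref{th1}, so the desired conclusion follows. I do not expect any analytic difficulty here, as all the work with the functional equation (\ref{30_03_6}) and with the structure of the functions $\psi_j$ has already been carried out in the proof of Theorem \ref{th1}; the only point demanding care is bookkeeping, namely keeping the case split exhaustive and checking both directions of the equivalence rather than a single implication. As an alternative, one could bypass the reduction and rerun the three sub-arguments of the proof of Theorem \ref{th1} directly under (I), (II), and (III) respectively, but this would merely repeat that proof and yield nothing new.
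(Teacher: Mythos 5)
Your proposal is correct and takes essentially the same route as the paper: Theorem \ref{th3} is obtained there as a pure reformulation of Theorem \ref{th1}, justified by exactly the equivalence of coefficient conditions you establish (the paragraph preceding the theorem notes that $a_2b_3\ne a_3b_2$ makes $(a_2,b_2)\ne(-a_3,-b_3)$ automatic, while $a_2b_3=a_3b_2$ makes $a_2b_2\ne -a_3b_3$ automatic and reduces $(a_2,b_2)\ne(-a_3,-b_3)$ to ``$|a_2|\ne|a_3|$ or $a_2=a_3$''). Your case split via the ratio $c=a_2/a_3=b_2/b_3$ just spells out this bookkeeping in slightly more detail; no new argument is needed and none is missing.
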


\begin{remark}\label{re1}
Let us assume that in Theorem \ref{th3} the coefficients
$a_j$ and $b_j$ satisfy the condition

\medskip
 
\noindent(IV) $a_2b_3=a_3b_2$ and $a_2=-a_3$.

\medskip

We will verify that in this case the distribution 
of the random vector $(L_1, L_2)$ uniquely determines  the 
distributions of the random variables
$\xi_1$ and $\xi_4$, i.e., $\nu_1=\mu_1$ and $\nu_4=\mu_4$, but 
 need not necessarily 
determines the distribution of the random variables
$\xi_2$ and $\xi_3$ up to  shift. Taking into account that 
 $|a_2|=|a_3|$ if and only if either $a_2=a_3$ or $a_2=-a_3$, 
it means that we can not strengthen 
Theorem \ref{th3} 
and replace conditions (II) and (III) 
in Theorem \ref{th3} by the condition $a_2b_3=a_3b_2$.

Since the random variables $\xi_2$ and  $\xi_3$ are 
identically  distributed and $\eta_2$ and  $\eta_3$ are 
also identically  distributed, put 
$f(y)=f_2(y)=f_3(y)$. In view of (IV), equation (\ref{30_03_5}) takes the form
\begin{equation}\label{10.1}
f_1(u)|f(a_2(u+cv))|^2
f_4(v)=1, \quad u, v\in \mathbb{R}.
\end{equation}
Set $l(y)=|f(a_2y)|^2$. It follows from  (\ref{10.1}) that the 
function $l(y)$ satisfies the equation
$$
l(u+v)=l(u)l(v), \quad u, v\in \mathbb{R}.
$$
Hence $l(y)=e^{\kappa y}$, where $\kappa\in \mathbb{R}$. Inasmuch as $l(-y)=l(y)$, 
we have $\kappa=0$, i.e., $l(y)=1$ for all $y\in \mathbb{R}$. 
Taking this into account, we get from equation (\ref{10.1})  that
$f_1(y)=f_4(y)=1$ for all $y\in \mathbb{R}$. Hence 
$\hat\nu_1(y)=\hat\mu_1(y)$ and $\hat\nu_4(y)=\hat\mu_4(y)$ 
for all $y\in \mathbb{R}$. This implies that
$\nu_1=\mu_1$ and $\nu_4=\mu_4$.

Consider the distributions $\mu$ and $\nu$ with the characteristic functions
$$ 
\hat\mu(y)=\exp\{(e^{iy}-1)\}, \quad 
\hat\nu(y)=\exp\{(e^{-iy}-1)\}, \quad y\in \mathbb{R}.
$$
Then we have
\begin{equation*}  
|\hat\mu(y)|=|\hat\nu(y)|=\exp\{\cos y-1\}, \quad y\in \mathbb{R}.
\end{equation*}
This implies that
\begin{equation} \label{10.2} 
|f(y)|=1, \quad y\in \mathbb{R}.
\end{equation}
It is obvious that 
 $\nu$ 
is not a shift of $\mu$. Moreover, It is easy to see that there is no 
a distribution $\lambda$ such that either $\nu=\mu*\lambda$ or $\mu=\nu*\lambda$.

Let $\xi_1$,
 $\xi_2$,  $\xi_3$, and $\xi_4$  
be independent random variables such that
$\xi_2$ and  $\xi_3$ are identically distributed. 
Assume that the random variable  
$\xi_j$  
   has the
distribution  $\mu_j$, $j=1, 2, 3, 4$, where $\mu_1$ and
$\mu_4$ are arbitrary distributions  
with nonvanishing characteristic functions and $\mu_2=\mu_3=\mu$.
Consider the linear forms $L_1=\xi_1+a_2\xi_2-a_2\xi_3$ and 
$L_2=b_2\xi_2-b_2\xi_3+\xi_4$. 
Let $\eta_1$,
 $\eta_2$,  $\eta_3$, and $\eta_4$  
be independent random variables   such that
$\eta_2$ and  $\eta_3$ are identically distributed. 
Suppose that $\eta_j$  
   has the
distribution  $\nu_j$, $j=1, 2, 3, 4$, where $\nu_1=\mu_1$, 
$\nu_2=\nu_3=\nu$,  and $\nu_4=\mu_4$.   
Put $M_1=\eta_1+a_2\eta_2-a_2\eta_3$ and 
$M_2=b_2\eta_2-b_2\eta_3+\eta_4$. 

Taking into account that
$f_1(y)=f_4(y)=1$ for all $y\in \mathbb{R}$ and (\ref{10.2}),
we see that the functions $f_1(y)$, $f(y)$, and $f_4(y)$ satisfy 
 equation (\ref{10.1}). Hence the 
 random vectors $(L_1, L_2)$ and $(M_1, M_2)$ are identically distributed, while
 $\nu$ 
is not a shift of $\mu$. 
\end{remark}
 
\section { Random
variables with values in the field 
of $p$-adic numbers}

Let $X$ be a locally compact Abelian group, $Y$ be its character group. Denote by
$(x, y)$ the value of a character $y\in Y$ at an element $x\in X$. Let $\mu$
be a distribution on $X$. Denote by
\begin{equation}\label{e1}
\hat\mu(y)=\int_X (x, y)d\mu(x), \quad y\in Y,
\end{equation}
the characteristic function of the distribution $\mu$.

Let $f(y)$ be a function on  $Y$  and let $h$ be an
element of
$Y$. Denote by $\Delta_h$ the  finite difference operator 
$$\Delta_h f(y)=f(y+h)-f(y), \quad y\in Y.$$ A function $f(y)$ on $Y$ is called
a  polynomial  if
$$\Delta_{h}^{n+1}f(y)=0$$ for some $n$ and for all $y, h \in Y$.

We need the following well-known statement 
(for the proof see, e.g., \cite[Proposition 1.30]{F1}).
\begin{lemma}\label{le2} Let $Y$ be a locally compact Abelian group such that
all its elements are compact. Then any continuous polynomial on $Y$
is a constant.
\end{lemma}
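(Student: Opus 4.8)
The plan is to reduce the statement to the restriction of $f$ to the cyclic subgroup generated by a single element, and then to play the polynomial growth of that restriction against the compactness hypothesis. First I would fix an arbitrary element $h\in Y$ and pass to the sequence $F(m)=f(mh)$, $m\in\mathbb{Z}$. Writing $\Delta$ for the ordinary forward difference of a sequence, a straightforward induction gives the identity $\Delta^{k}F(m)=(\Delta_h^{k}f)(mh)$ for every $k$, since $\Delta F(m)=f(mh+h)-f(mh)=(\Delta_h f)(mh)$. Thus the polynomial condition $\Delta_h^{\,n+1}f=0$ forces $\Delta^{n+1}F(m)=0$ for all $m\in\mathbb{Z}$. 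It is a classical elementary fact about finite differences that a sequence annihilated by $\Delta^{n+1}$ is the restriction to $\mathbb{Z}$ of a polynomial of degree at most $n$; hence $F(m)=P(m)$ for some polynomial $P$ with complex coefficients.

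Next I would invoke the hypothesis. By assumption every element of $Y$ is compact, which means precisely that the closure $K$ of the cyclic subgroup $\{mh:m\in\mathbb{Z}\}$ is a compact subset of $Y$. Since $f$ is continuous, it is bounded on $K$, so the sequence $F(m)=f(mh)$ is bounded on $\mathbb{Z}$. A polynomial $P(m)$ that remains bounded on $\mathbb{Z}$ must have degree $0$, i.e.\ be constant, and therefore $F(m)=F(0)=f(0)$ for every $m$; taking $m=1$ already yields $f(h)=f(0)$. As $h\in Y$ was arbitrary, this shows $f(h)=f(0)$ for all $h$, so $f$ is constant, which is the assertion of the lemma.

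As for the difficulties, the two auxiliary facts about finite differences (that $\Delta^{n+1}F=0$ makes $F$ a polynomial in $m$, and that a polynomial bounded on $\mathbb{Z}$ is constant) are entirely routine, and I would not belabor them. The single load-bearing step is the translation of the group-theoretic hypothesis ``all elements of $Y$ are compact'' into boundedness of the sequence $F$; this is where compactness of $\overline{\{mh\}}$ together with continuity of $f$ does all the work. I anticipate no serious obstacle, only the need to state the difference identity $\Delta^{k}F(m)=(\Delta_h^{k}f)(mh)$ correctly and to observe that the constancy of $F$ is read off at $m=1$.
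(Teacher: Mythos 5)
Your proof is correct, and it is essentially the standard argument for this fact; note that the paper itself does not prove the lemma but only cites \cite[Proposition 1.30]{F1}, so your write-up supplies a genuinely self-contained proof. Every step checks out: the identity $\Delta^{k}F(m)=(\Delta_h^{k}f)(mh)$ holds for all $m\in\mathbb{Z}$ (not just $m\ge 0$), so $F$ really is a polynomial sequence on all of $\mathbb{Z}$; the hypothesis that $h$ is a compact element is, by definition, compactness of the closed subgroup $\overline{\{mh:m\in\mathbb{Z}\}}$, which together with continuity of $f$ gives the boundedness that forces the polynomial to be constant and yields $f(h)=f(0)$ for arbitrary $h$.
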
 

Consider the field of $p$-adic numbers $\mathbb{Q}_p$. When we say 
the group $\mathbb{Q}_p$, we mean the additive 
group of the field $\mathbb{Q}_p$.
The group $\mathbb{Q}_p$ is a locally compact Abelian group. 
Its character group is
topologically isomorphic to  $\mathbb{Q}_p$ (\!\!\cite[(25.1)]{HeRo1}). 
Multiplication by a nonzero element of $\mathbb{Q}_p$ is a topological 
automorphism of the group $\mathbb{Q}_p$. Note that $(ax, y)=(x, ay)$ 
for all $a, x, y\in \mathbb{Q}_p$. If $\mu$ is a distribution
on $\mathbb{Q}_p$, the characteristic function $\hat\mu(y)$ is defined by
formula (\ref{e1}), where $X=Y=\mathbb{Q}_p$. 
The group $\mathbb{Q}_p$ is totally disconnected and 
consists of compact elements. Denote by $|\cdot|_p$ 
the norm in the field $\mathbb{Q}_p$. 

In this section, we prove the following 
analogue of  the Li--Zheng theorem for the field $\mathbb{Q}_p$.

\begin{theorem}\label{th2}  Let $\xi_1$,  $\xi_2$,  $\xi_3$, and 
$\xi_4$ be independent
random variables with  values in the field $\mathbb{Q}_p$ with
nonvanishing characteristic functions. Let $a_j$, $b_j$, $j=2,3$,   
be nonzero 
elements of $\mathbb{Q}_p$. Consider the linear forms 
$L_1=\xi_1+a_2\xi_2+a_3\xi_3$ and $L_2=b_2\xi_2+b_3\xi_3+\xi_4$. 
Assume that one of the following conditions holds:
\renewcommand{\labelenumi}{\rm(\Roman{enumi})}
\begin{enumerate} 
\item
 $a_2b_3\ne a_3b_2$;
\item
$a_2b_3=a_3b_2$, $|a_2|_p \ne |a_3|_p$, and   the random variables
$\xi_2$ and  $\xi_3$ are identically  distributed. 
\end{enumerate} 
Then  the distribution of  
the random vector $(L_1, L_2)$  determines 
  the 
distributions of the random variables
$\xi_j$, $j=1, 2, 3, 4$, up to shift.
\end{theorem}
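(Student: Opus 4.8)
The plan is to follow the scheme of the proof of Theorem \ref{th1}, replacing the characters $e^{iy(\cdot)}$ of $\mathbb{R}$ by the pairing $(x,y)$ of $\mathbb{Q}_p$ with its character group $\mathbb{Q}_p$, and replacing the analytic tools (logarithm, Lemma \ref{le1}) by finite differences together with Lemma \ref{le2}. First I would introduce independent random variables $\eta_1,\dots,\eta_4$ with distributions $\nu_j$ (with $\eta_2,\eta_3$ identically distributed in case (II)), form $M_1=\eta_1+a_2\eta_2+a_3\eta_3$, $M_2=b_2\eta_2+b_3\eta_3+\eta_4$, and assume that $(L_1,L_2)$ and $(M_1,M_2)$ are identically distributed. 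Using $(ax,y)=(x,ay)$ and the homomorphism property of characters, by the same computation as in the proof of Theorem \ref{th1} the equality of the characteristic functions becomes
\[
f_1(u)f_2(a_2u+b_2v)f_3(a_3u+b_3v)f_4(v)=1,\qquad u,v\in\mathbb{Q}_p,
\]
where $f_j=\hat\nu_j/\hat\mu_j$ is continuous and nonvanishing. Setting $v=0$ and then $u=0$ recovers $f_1$ and $f_4$ from $f_2,f_3$ and reduces the problem to $f_2(a_2u+b_2v)f_3(a_3u+b_3v)=f_2(a_2u)f_3(a_3u)f_2(b_2v)f_3(b_3v)$. It therefore suffices to prove that $f_2$ and $f_3$ are characters, i.e. $f_j(y)=(x_j,y)$ for some $x_j\in\mathbb{Q}_p$, which yields $\nu_j=\mu_j*E_{x_j}$.

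In case (I), since $a_2b_3\ne a_3b_2$ the substitution $s=a_2u+b_2v$, $t=a_3u+b_3v$ is an automorphism of $\mathbb{Q}_p^2$, so $s$ and $t$ may be varied independently; multiplication by the nonzero $a_j,b_j$ being automorphisms of $\mathbb{Q}_p$, finite differences in $u$ and $v$ translate into arbitrary finite differences in $s$ and $t$. Applying the mixed difference in $u$ and $v$ to $\log|f_j|$ kills the $u$-only and $v$-only terms and, by the independence of $s,t$, shows that $\log|f_2|$ and $\log|f_3|$ have vanishing third differences, hence are continuous polynomials on $\mathbb{Q}_p$; by Lemma \ref{le2} they are constant, and vanishing at $0$ gives $|f_2|=|f_3|=1$. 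The same independence, applied multiplicatively, shows that the second multiplicative difference $\beta_j(p,q)=f_j(p+q)/(f_j(p)f_j(q))$ is independent of the base point, i.e. a continuous symmetric bihomomorphism of $\mathbb{Q}_p^2$ into the circle.

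The remaining difficulty --- which over $\mathbb{R}$ was settled by the hypotheses $a_2b_2\ne-a_3b_3$ and $\sigma_2=\sigma_3$ --- is to rule out a genuine second-order (``Gaussian'') factor, and this is the main obstacle, because such a factor does satisfy the reduced equation. I expect to overcome it by two facts special to $\mathbb{Q}_p$. Since $f_j=\hat\nu_j/\hat\mu_j$ is a ratio of characteristic functions, $f_j(-y)=\overline{f_j(y)}$; as $\beta_j$ is bilinear, $\beta_j(-p,-q)=\beta_j(p,q)$, while the Hermitian symmetry together with $|f_j|=1$ gives $\beta_j(-p,-q)=1/\beta_j(p,q)$, whence $\beta_j(p,q)^2=1$. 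Thus each map $\beta_j(\cdot,q)$ is a continuous homomorphism of $\mathbb{Q}_p$ into $\{\pm1\}$; since $\mathbb{Q}_p$ is divisible, every such homomorphism is trivial, so $\beta_j\equiv1$. Then $f_j$ is a continuous homomorphism of $\mathbb{Q}_p$ into $\mathbb{C}^{\times}$, and, all elements of $\mathbb{Q}_p$ being compact, its image lies in the circle, so $f_j$ is a character.

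Finally, in case (II) the moment-free argument parallels item~3 in the proof of Theorem \ref{th1}. Here $\xi_2,\xi_3$ (and $\eta_2,\eta_3$) are identically distributed, so $f_2=f_3=:f$; writing $c=b_2/a_2=b_3/a_3$ and $\varphi(y)=f(a_2y)f(a_3y)$, the reduced equation becomes $\varphi(u+cv)=\varphi(u)\varphi(cv)$, so $\varphi$ is a continuous homomorphism of $\mathbb{Q}_p$ into $\mathbb{C}^{\times}$, hence a character $(d,\cdot)$ by compactness of all elements. Since $|a_2|_p\ne|a_3|_p$ we have $a_2+a_3\ne0$, so dividing $f$ by the character with $e=d/(a_2+a_3)$ gives $\tilde f=f\cdot\overline{(e,\cdot)}$ satisfying $\tilde f(a_2y)\tilde f(a_3y)=1$, i.e. $\tilde f(ky)=1/\tilde f(y)$ with $k=a_2/a_3$. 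As $|k|_p=|a_2|_p/|a_3|_p\ne1$ we may assume $|k|_p<1$, so $k^{n}y\to0$; iterating $\tilde f(y)=\tilde f(k^{2n}y)$ and using continuity with $\tilde f(0)=1$ forces $\tilde f\equiv1$, so $f$ is a character. This completes the reduction and the proof.
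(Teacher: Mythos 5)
Your proposal is correct, and its skeleton matches the paper's proof: reduce to the functional equation $f_1(u)f_2(a_2u+b_2v)f_3(a_3u+b_3v)f_4(v)=1$ for the ratios $f_j=\hat\nu_j/\hat\mu_j$, eliminate $f_1,f_4$ by setting $v=0$ and $u=0$, handle $\ln|f_j|$ in case (I) by additive finite differences plus Lemma \ref{le2}, and in case (II) show $g(y)=f(a_2y)f(a_3y)$ is a character $(d,y)$, untwist $f$ by the character with parameter $d/(a_2+a_3)$, and iterate $y\mapsto ky$ with $k=a_2/a_3$, $|k|_p<1$. The one genuine divergence is the endgame of case (I). The paper specializes its third-multiplicative-difference identity (\ref{01_04_15}) at $u=v=0$, $g=-\frac{b_2k}{b_2-b_3}$ to obtain $f_2^2(h)=f_2(h+b_2k)f_2(h-b_2k)$, deduces that $f_2^2$ is a character, and then invokes the automorphism $y\mapsto 2y$ of $\mathbb{Q}_p$ to conclude $f_2$ itself is a character. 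You instead package the same third-difference information as the statement that $\beta_j(p,q)=f_j(p+q)/\bigl(f_j(p)f_j(q)\bigr)$ is a continuous symmetric bicharacter, and kill it by playing Hermitian symmetry ($\beta_j(-p,-q)=\overline{\beta_j(p,q)}=\beta_j(p,q)^{-1}$, using $|f_j|=1$) against bilinearity ($\beta_j(-p,-q)=\beta_j(p,q)$) to get $\beta_j^2\equiv 1$, and then use divisibility of $\mathbb{Q}_p$ to force $\beta_j\equiv 1$; both routes ultimately rest on the invertibility of $2$, but yours isolates the obstruction conceptually as a $\{\pm 1\}$-valued bicharacter (the exact multiplicative analogue of showing the quadratic coefficient $\sigma_j$ vanishes in Theorem \ref{th1}), while the paper's is a more hands-on substitution. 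A further small difference: in case (II) you merge the paper's two separate iterations (one for $\ln|f|$, one for the untwisted function $h$) into a single iteration $\tilde f(y)=\tilde f(k^{2n}y)\to\tilde f(0)=1$, which controls modulus and phase simultaneously; this is a harmless simplification, since the paper's separate modulus step is in fact subsumed by its own second iteration.
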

\begin{proof} 
Consider independent random variables 
$\eta_1$,  $\eta_2$,  $\eta_3$, and $\eta_4$ with  values in 
the field $\mathbb{Q}_p$ with
nonvanishing characteristic functions. 
Put $M_1=\eta_1+a_2\eta_2+a_3\eta_3$ and  
$M_2=b_2\eta_2+b_3\eta_3+\eta_4$. 
Suppose that the distributions of the random vectors   $(L_1, L_2)$ 
 and $(M_1, M_2)$ coincide.
Note that 
if $\xi$ is a random variable with values
in $\mathbb{Q}_p$ and distribution $\mu$, then 
$\hat\mu(y)=\textbf{E}[(\xi, y)]$.
Taking this into account and the fact that 
$(ax, y)=(x, ay)$ 
for all $a, x, y\in \mathbb{Q}_p$, we can argue as in item 1 of 
the proof of Theorem \ref{th1}.  
Keeping the same notation, we arrive at the equation 
\begin{equation}\label{11.1}
f_1(u)f_2(a_2u+b_2v)
f_3(a_3u+b_3v)f_4(v)=1, \quad u, v\in \mathbb{Q}_p.
\end{equation}

1. Assume that condition (I) holds. Since  
we do not suppose that $\xi_2$ and  $\xi_3$ are identically 
distributed and $\eta_2$ and  $\eta_3$ are identically 
distributed, we can assume, without loss of generality, that 
$L_1=\xi_1+\xi_2+\xi_3$ and  $M_1=\eta_1+\eta_2+\eta_3$.
Then equation (\ref{11.1})  takes the form 
\begin{equation}\label{01_04_6}
f_1(u)f_2(u+b_2v)
f_3(u+b_3v)f_4(v)=1, \quad u, v\in \mathbb{Q}_p,
\end{equation}
and the condition $a_2b_3\ne a_3b_2$ is transform to the condition
$b_2 \ne  b_3$.
The group
$\mathbb{Q}_p$ is totally disconnected. For this reason,   
as opposed to the case of the real line,  we can not take the 
logarithm of both sides of  equation (\ref{01_04_6})
and pass to the corresponding additive equation.

To solve  equation  (\ref{01_04_6}) we use a slightly different approach
and split the solution of equation (\ref{01_04_6}) into two parts. 
First we prove that $|f_j(y)|=1$ for all $y\in \mathbb{Q}_p$, $j=2, 3$. 
Then we prove that the functions $f_j(y)$, $j=1, 2, 3, 4$, are 
characters of the 
$\mathbb{Q}_p$. It is obvious that the statement of the theorem follows from this.

Put $$\theta_j(y)=\ln|f_j(y)|, \quad  j=1, 2, 3, 4.$$  
It follows from (\ref{01_04_6})
that the functions $\theta_j(y)$ satisfy the equation
$$
\theta_1(u)+\theta_2(u+b_2v)
+\theta_3(u+b_3v)+\theta_4(v)=0, 
\quad u, v\in \mathbb{Q}_p,
$$
which can be written in the form
\begin{equation}\label{01_04_7}
\theta_2(u+b_2v)
+\theta_3(u+b_3v)=C(u)+D(v), 
\quad u, v\in \mathbb{Q}_p.
\end{equation}
 For solving   
equation (\ref{01_04_7}) we use the finite difference method. 
The reasoning is standard and the same as in the case of the real line. 
We present it here for completeness.

Let $g$ be an arbitrary element of  the group
$\mathbb{Q}_p$.  Substitute $u-{b_3}g$ for $u$ and
$v+g$ for $v$  in equation (\ref{01_04_7}). Subtracting 
(\ref{01_04_7})  from the   resulting equation we get   
\begin{equation}
\label{01_04_8}
\Delta_{(b_2-b_3)g}{\theta_2(u + b_2 v)}
    =\Delta_{-{b_3}g}C(u)+\Delta_{g}D(v),
\quad u,v\in \mathbb{Q}_p.
\end{equation}
Let $h$ be an arbitrary element of the group
$\mathbb{Q}_p$. Substitute $u+h$ for $u$  in equation 
(\ref{01_04_8}). Subtracting  
(\ref{01_04_8})  from the   resulting equation we obtain 
\begin{equation}
\label{01_04_9}
\Delta_h\Delta_{(b_2-b_3)g}{\theta_2(u + b_2 v)}
    =\Delta_h\Delta_{-{b_3}g}C(u),
\quad u,v\in \mathbb{Q}_p.
\end{equation}
Let $k$ be an arbitrary element of the group
$\mathbb{Q}_p$. Substitute $v+k$ for $v$  in equation 
(\ref{01_04_9}). Subtracting  
(\ref{01_04_9})  from the   resulting equation we get 
\begin{equation}\label{new1}
\Delta_{b_2k}\Delta_h\Delta_{(b_2-b_3)g}
{\theta_2(u + b_2 v)}=0,
\quad u,v\in \mathbb{Q}_p.
\end{equation}
Substituting $v=0$ in equation (\ref{new1}), we obtain
\begin{equation*} 
\Delta_{b_2k}\Delta_h\Delta_{(b_2-b_3)g}
{\theta_2(u)}=0,
\quad u\in \mathbb{Q}_p.
\end{equation*}
Since $b_2-b_3\ne 0$ and $g$, $h$ and $k$ are arbitrary elements 
of the group
$\mathbb{Q}_p$, we conclude that the function  $\theta_2(y)$ satisfies 
the equation
\begin{equation*} 
\Delta^3_{h}{\theta_2(y)}=0,
\quad y,h\in \mathbb{Q}_p,
\end{equation*}
i.e., is a polynomial on $\mathbb{Q}_p$. Since
the group $\mathbb{Q}_p$ consists of compact elements and 
the polynomial $\theta_2(y)$ is continuous, by Lemma \ref{le2},
$\theta_2(y)$ is a constant.
In view of $\theta_2(0)=0$, we have $\theta_2(y)=0$
for all $y\in \mathbb{Q}_p$. For the function $\theta_3(y)$ 
we argue similarly
excluding first the function $\theta_2(y)$ from equation
(\ref{01_04_7}). Thus  we proved that 
$\theta_2(y)=\theta_3(y)=0$ and hence
$$|f_2(y)|=|f_3(y)|=1, \quad y\in \mathbb{Q}_p.$$

Let us prove that the functions $f_j(y)$, $j=1, 2, 3, 4$, 
are characters of the 
group $\mathbb{Q}_p$.
Rewrite equation (\ref{01_04_6}) in the form
\begin{equation}\label{01_04_12}
f_2(u+b_2v)
f_3(u+b_3v)=S(u)T(v), \quad u, v\in \mathbb{Q}_p.
\end{equation}
To solve equation (\ref{01_04_12}), we apply the method 
that was used to prove Theorem 3.1 in \cite{F}, see also  
\cite[Theorem 15.8]{F1}.

Let $g$ be an arbitrary element of the group
$\mathbb{Q}_p$.   Substitute $u-b_3g$ for $u$ and
$v+g$ for $v$  in equation (\ref{01_04_12}). 
Dividing the   resulting equation by equation (\ref{01_04_12}), we get 
\begin{equation}\label{01_04_13}
\frac{f_2(u+b_2v-b_3g+b_2g)}{f_2(u+b_2v)}
=\frac{S(u-b_3g)T(v+g)}{S(u)T(v)}, \quad u, v\in \mathbb{Q}_p.
\end{equation}
Let $h$ be an arbitrary element of the group
$\mathbb{Q}_p$. Substitute $u+h$ for $u$  in equation 
(\ref{01_04_13}). Dividing the   resulting equation 
by equation (\ref{01_04_13}), 
we receive 
\begin{equation}\label{01_04_14}
\frac{f_2(u+b_2v-b_3g+b_2g+h)f_2(u+b_2v)}
{f_2(u+b_2v+h)f_2(u+b_2v-b_3g+b_2g)}
=\frac{S(u-b_3g+h)S(u)}{S(u+h)S(u-b_3g)}, \quad u, v\in \mathbb{Q}_p.
\end{equation} 
Let $k$ be an arbitrary element of the group
$\mathbb{Q}_p$. Substitute $v+k$ for $v$  in equation 
(\ref{01_04_14}). Dividing the   resulting equation by equation 
(\ref{01_04_14}), we obtain 
\begin{multline}\label{01_04_15}
\frac{f_2(u+b_2v-b_3g+b_2g+h+b_2k)f_2(u+b_2v+b_2k)}
{f_2(u+b_2v+h+b_2k)f_2(u+b_2v-b_3g+b_2g+b_2k)}\\ 
\times\frac{f_2(u+b_2v+h)f_2(u+b_2v-b_3g+b_2g)}
{f_2(u+b_2v-b_3g+b_2g+h)
f_2(u+b_2v)}=1, \quad u, v\in \mathbb{Q}_p.
\end{multline}
Substitute in (\ref{01_04_15}) $u=v=0$ and $g=-\frac{b_2k}{b_2-b_3}$. Then we get
\begin{equation}\label{01_04_16}
\frac{f^2_2(h)f_2(b_2k)f_2(-b_2k)}
{f_2(h+b_2k)f_2(h-b_2k)}=1, \quad h, k\in \mathbb{Q}_p.
\end{equation}
Taking into account that 
\begin{equation}\label{01_04_20}
f_2(-y)=\overline{f_2(y)}, \quad  |f_2(y)|=1, \quad y\in \mathbb{Q}_p,
\end{equation} 
we have $f_2(b_2k)f_2(-b_2k)=1$ for all $k\in \mathbb{Q}_p$.
Then it follows from (\ref{01_04_16}) that 
the function $f_2(y)$ 
satisfies the equation
\begin{equation}\label{01_04_17}
f^2_2(u)= f_2(u+v)f_2(u-v), \quad u, v\in \mathbb{Q}_p.
\end{equation}
In view of (\ref{01_04_20}), we get from (\ref{01_04_17})  that
\begin{equation}\label{01_04_18}
f^2_2(u+v)= f^2_2(u)f^2_2(v), \quad u, v\in \mathbb{Q}_p,
\end{equation}
i.e., the function $f^2_2(y)$
is a character of the group $\mathbb{Q}_p$. 
Substituting  $u=v=y$ in (\ref{01_04_17}), we obtain 
\begin{equation}\label{01_04_19}
f^2_2(y)= f_2(2y), \quad y\in \mathbb{Q}_p.
\end{equation}
Taking into account that the mapping $y\rightarrow 2y$ is a topological 
automorphism of the group  $\mathbb{Q}_p$, it follows from  
(\ref{01_04_18}) and (\ref{01_04_19}) that the 
function $f_2(y)$
satisfies the equation
$$
f_2(u+v)= f_2(u)f_2(v), \quad u, v\in \mathbb{Q}_p,
$$
i.e., is a character of the group $\mathbb{Q}_p$. For the function 
$f_3(y)$ we argue similarly. By the Pontryagin duality theorem,
 there are elements
$x_2, x_3\in \mathbb{Q}_p$ such that 
\begin{equation}\label{02_04_2}
f_j(y)=(x_j, y), \quad y\in \mathbb{Q}_p, \ j=2, 3.
\end{equation}
Substituting 
(\ref{02_04_2}) into (\ref{01_04_6}) and  putting first $v=0$ and then
$u=0$ in the obtained equation, we get that the functions $f_1(y)$ and
$f_4(y)$
are also characters of the group $\mathbb{Q}_p$. 

Thus, we have proved that the characteristic functions $\hat\mu_j(y)$, 
$j=1, 2, 3, 4$, are 
determined up to multiplication by a character. Hence
the  distributions $\mu_j$, $j=1, 2, 3, 4$, are 
 determined up to  shift. The theorem is proved if condition (I) is satisfied.

2. Assume that condition (II) holds. 
Put   
\begin{equation}\label{n2}
c=b_2/a_2=b_3/a_3, \quad g(y)=f_2(a_2y)f_3(a_3y),  \quad \tau(y)=\ln|g(y)|.
\end{equation}

2$a$.  We  prove in this item that the functions $f_1(y)$, $f_4(y)$, and $g(y)$ 
are characters of the group $\mathbb{Q}_p$. In doing so,  
we do not assume that $|a_2|_p \ne |a_3|_p$ 
and the random variables 
$\xi_2$ and $\xi_3$ are  
 identically distributed.
 
Since $a_2u+b_2v=a_2(u+cv)$ and $a_3u+b_3v=a_3(u+cv)$,
rewrite equation (\ref{11.1}) in the form
\begin{equation}\label{08_04_10}
f_1(u)g(u+cv)f_4(v)=1, \quad u, v\in \mathbb{Q}_p.
\end{equation}
Substituting first $v=0$ and then
$u=0$ in   (\ref{08_04_10}), we get that 
the function $g(y)$ satisfies the equation
\begin{equation}\label{9.1}
g(u+v)=g(u)g(v), 
\quad u, v\in \mathbb{Q}_p.
\end{equation}
It follows from  
(\ref{9.1})  that the function
 $\tau(y)$ satisfies the equation
\begin{equation*} 
\tau(u+v)=\tau(u)+\tau(v), 
\quad u, v\in \mathbb{Q}_p.
\end{equation*}
Hence $\tau(y)$ is a continuous polynomial. Since
the group $\mathbb{Q}_p$ consists of compact elements, by Lemma \ref{le2},
$\tau(y)$ is a constant.
In view of $\tau(0)=0$, we have $\tau(y)=0$ and hence $|g(y)|=1$ 
for all $y\in \mathbb{Q}_p$. Taking into account (\ref{9.1}), 
this means that the function $g(y)$
 is a character of the group $\mathbb{Q}_p$. By the Pontryagin 
duality theorem,
 there is an element 
$a\in \mathbb{Q}_p$ such that 
\begin{equation}\label{02_04_10}
g(y)=(a, y), \quad y\in \mathbb{Q}_p.
\end{equation}
Hence 
\begin{equation}\label{08_04_11}
g(u+cv)=(a, u+cv), \quad u, v\in \mathbb{Q}_p.
\end{equation}
Substituting 
(\ref{08_04_11}) into (\ref{08_04_10}), we find from the resulting 
equation 
that 
the functions $f_1(y)$ and
$f_4(y)$
are also characters of the group $\mathbb{Q}_p$. 

Note that in the proof 
we did not use the fact that
$|a_2|_p \ne |a_3|_p$ and the random variables $\xi_2$ and $\xi_3$ are  
 identically distributed.

2$b$. Suppose that the random variables $\eta_2$ 
 and  $\eta_3$ are identically distributed. 
 Since the random variables $\xi_2$ and $\xi_3$ are  also 
 identically distributed, we 
 have  $f_2(y)=f_3(y)$.
Set $$f(y)=f_2(y)=f_3(y).$$ 
Then
\begin{equation}\label{11.04.1}
g(y)=f(a_2y)f(a_3y).
\end{equation}
We  will prove now that the function  $f(y)$ is a 
character of the group $\mathbb{Q}_p$.  
Set  
$$
\theta(y)=\ln|f(y)|.$$  
We have 
$$
\tau(y)=\ln|g(y)|=\ln|f(a_2y)f(a_3y)|=\theta(a_2y)+\theta(a_3y), 
\quad y\in \mathbb{Q}_p.
$$ 
Since $\tau(y)=0$ for all $y\in \mathbb{Q}_p$,  it follows from this that 
\begin{equation}\label{9.2}
\theta(a_2y)+\theta(a_3y)=0, \quad y\in \mathbb{Q}_p.
\end{equation}

Inasmuch as
$|a_2|_p\ne|a_3|_p$, assume for definiteness that $|a_2|_p<|a_3|_p$. 
Put $k=\frac{a_2}{a_3}$. Then  $|k|_p<1$. We obtain from (\ref{9.2})
that $\theta(y)=-\theta(ky)$, and hence
\begin{equation} \label{9.3}
\theta(y)=(-1)^n\theta(k^ny), 
\quad y\in \mathbb{Q}_p, \ n=1, 2, \dots.
\end{equation}
It is obvious that  $|k^ny|_p\rightarrow 0$ as $n\rightarrow\infty$ for all 
$y\in \mathbb{Q}_p$. Since $\theta(y)$
 is a continuous function  and 
$\theta(0)=0$, we obtain from (\ref{9.3}) that 
$\theta(y)=0$ and hence $|f(y)|=1$ for all 
$y\in \mathbb{Q}_p$.  
 
Since $|a_2|_p\ne|a_3|_p$, we have $a_2+a_3\ne 0$. Put 
\begin{equation}\label{02_04_9}
b=\frac{a}{a_2+a_3}, \quad h(y)=f(y)(-b, y).
\end{equation}  
 It follows from   
(\ref{02_04_10}), (\ref{11.04.1}), and (\ref{02_04_9}) that
\begin{multline*}
h(a_2y)h(a_3y)=f(a_2y)(-b, a_2y)f(a_3y)(-b, a_3y)\\=g(y)(-b(a_2+a_3), y)
=g(y)(-a, y)=1, \quad  y\in \mathbb{Q}_p.
\end{multline*}
Hence $h(y)=h^{-1}(ky)$.  This implies that
\begin{equation}\label{02_04_11}
h(y)=h^{(-1)^n}(k^ny), 
\quad y\in \mathbb{Q}_p, \ n=1, 2, \dots.
\end{equation}
We have $|k^ny|_p\rightarrow 0$ as $n\rightarrow\infty$ for all 
$y\in \mathbb{Q}_p$. Inasmuch as $h(y)$
 is a continuous function  and 
$h(0)=1$, it follows from (\ref{02_04_11}) that 
$h(y)=1$ for all $y\in \mathbb{Q}_p$, and (\ref{02_04_9}) implies that 
\begin{equation}\label{02_04_12}
f(y)=(b, y), 
\quad y\in \mathbb{Q}_p.
\end{equation}
Taking into account (\ref{02_04_12}) and the fact that $f_1(y)$ and
$f_4(y)$
are also characters of the group $\mathbb{Q}_p$, we see
 that the characteristic 
functions $\hat\mu_j(y)$ are 
determined up to multiplication by a character. Hence
the  distributions $\mu_j$ are 
 determined up to  shift.
 The theorem is proved if condition (II) is satisfied, and hence
is completely proved.
\end{proof} 
\begin{corollary}\label{co1}
Let $\xi_1$,  $\xi_2$,  $\xi_3$, and 
$\xi_4$ be independent
random variables with  values in the field $\mathbb{Q}_p$ with
nonvanishing characteristic functions. Let $a_j$, $b_j$, $j=2,3$,   
be nonzero 
elements of $\mathbb{Q}_p$. Consider the linear forms 
$L_1=\xi_1+a_2\xi_2+a_3\xi_3$ and $L_2=b_2\xi_2+b_3\xi_3+\xi_4$. 
Then  the distribution of  
the random vector $(L_1, L_2)$  determines 
  the 
distributions of the random variables
$\xi_1$ and $\xi_4$ up to shift.
\end{corollary}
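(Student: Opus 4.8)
The plan is to reuse the analysis already carried out inside the proof of Theorem \ref{th2}, observing that the conclusion about $\xi_1$ and $\xi_4$ alone requires neither a nondegeneracy condition on the coefficients nor the assumption that $\xi_2$ and $\xi_3$ be identically distributed.

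First I would repeat the opening reduction of the proof of Theorem \ref{th2} without change: introduce independent random variables $\eta_1$, $\eta_2$, $\eta_3$, $\eta_4$ with values in $\mathbb{Q}_p$ and nonvanishing characteristic functions, form $M_1=\eta_1+a_2\eta_2+a_3\eta_3$ and $M_2=b_2\eta_2+b_3\eta_3+\eta_4$, assume that $(L_1,L_2)$ and $(M_1,M_2)$ are identically distributed, set $f_j(y)=\hat\nu_j(y)/\hat\mu_j(y)$, and arrive at equation (\ref{11.1}). At this stage nothing is assumed about the $a_j$, $b_j$ beyond their being nonzero, and $\xi_2$, $\xi_3$ are not assumed to be identically distributed.

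Next I would split the argument according to whether $a_2b_3=a_3b_2$. If $a_2b_3\ne a_3b_2$, then condition (I) of Theorem \ref{th2} holds and item 1 of its proof applies verbatim; recall that this item does not use that $\xi_2$ and $\xi_3$ are identically distributed (it begins by rescaling so that $L_1=\xi_1+\xi_2+\xi_3$, which leaves $\xi_1$ and $\xi_4$ untouched). It yields that all of $f_1(y),\dots,f_4(y)$, and in particular $f_1(y)$ and $f_4(y)$, are characters of $\mathbb{Q}_p$. If instead $a_2b_3=a_3b_2$, then $c=b_2/a_2=b_3/a_3$ is well defined, and putting $g(y)=f_2(a_2y)f_3(a_3y)$ turns equation (\ref{11.1}) into equation (\ref{08_04_10}). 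Item 2$a$ of the proof of Theorem \ref{th2} then applies, and, as explicitly remarked there, it uses neither $|a_2|_p\ne|a_3|_p$ nor that $\xi_2$ and $\xi_3$ are identically distributed; it gives that $g(y)$, $f_1(y)$, and $f_4(y)$ are characters of $\mathbb{Q}_p$.

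In either case $f_1(y)$ and $f_4(y)$ are characters of $\mathbb{Q}_p$, so by the Pontryagin duality theorem there are $x_1,x_4\in\mathbb{Q}_p$ with $\hat\nu_1(y)=\hat\mu_1(y)(x_1,y)$ and $\hat\nu_4(y)=\hat\mu_4(y)(x_4,y)$; equivalently $\nu_1=\mu_1*E_{x_1}$ and $\nu_4=\mu_4*E_{x_4}$, so that the distribution of $(L_1,L_2)$ determines $\mu_1$ and $\mu_4$ up to shift. I do not expect a genuine obstacle here, since the real work is already contained in the proof of Theorem \ref{th2}; the only point that needs care is the bookkeeping, namely verifying that the two cases $a_2b_3\ne a_3b_2$ and $a_2b_3=a_3b_2$ exhaust all nonzero coefficients, and that each case invokes exactly the portion of that proof which is free of the extra hypotheses (I) and (II).
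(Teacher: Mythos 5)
Your proposal is correct and follows essentially the same route as the paper: the paper's own proof likewise handles the case $a_2b_3\ne a_3b_2$ by invoking Theorem \ref{th2} (whose condition (I) requires no assumption on the distributions of $\xi_2$, $\xi_3$) and the case $a_2b_3=a_3b_2$ by citing item 2$a$ of that theorem's proof, which shows $f_1(y)$ and $f_4(y)$ are characters of $\mathbb{Q}_p$ without using $|a_2|_p\ne|a_3|_p$ or identical distributions. Your additional bookkeeping (the explicit reduction to equation (\ref{11.1}) and the Pontryagin duality step) is just an unrolling of what the paper leaves implicit.
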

 
\begin{proof}
By Theorem \ref{th2}, in the case, when $a_2b_3\ne a_3b_2$, the statement 
of the corollary is true. If $a_2b_3=a_3b_2$, as has been proved in item 
2$a$ of the proof of Theorem \ref{th2} the functions $f_1(y)$ and  $f_4(y)$  
are characters of the group $\mathbb{Q}_p$. The statement 
of the corollary follows from this.
\end{proof}

We can not omit the condition $|a_2|_p\ne|a_3|_p$ in condition 
(II) of Theorem \ref{th2}. 
Namely, the following statement is true.
\begin{proposition}\label{pr1}
Let   $a_j$, $b_j$, $j=2,3$,   
be nonzero 
elements of the field $\mathbb{Q}_p$  such that $a_2b_3=a_3b_2$. 
Assume that
$|a_2|_p=|a_3|_p$. Then there are independent random variables 
$\xi_1$, $\xi_2$,  $\xi_3$, and $\xi_4$  
  with values in $\mathbb{Q}_p$ with nonvanishing 
  characteristic functions such that the following is true:
\renewcommand{\labelenumi}{\rm(\Roman{enumi})}
\begin{enumerate} 
\item
the random variables $\xi_2$ and  $\xi_3$ are identically distributed; 
 \item
the distribution 
of the random vector $(L_1, L_2)$, where $L_1=\xi_1+a_2\xi_2+a_3\xi_3$ and $L_2=b_2\xi_2+b_3\xi_3+\xi_4$,  need not necessarily 
determine the distribution of the random variables
$\xi_2$ and $\xi_3$ up to  shift. 
\end{enumerate}   
\end{proposition}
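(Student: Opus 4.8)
The plan is to construct an explicit counterexample, exploiting the fact that $\mathbb{Q}_p$ is totally disconnected. As in the proof of Theorem~\ref{th2}, let $\xi_2,\xi_3$ be independent and identically distributed with common distribution $\mu$, let $\eta_2,\eta_3$ be independent and identically distributed with common distribution $\nu$, put $\mu_1=\nu_1$ and $\mu_4=\nu_4$ equal to arbitrary distributions with nonvanishing characteristic functions, and set $f=\hat\nu/\hat\mu$. Then $\nu$ is a shift of $\mu$ if and only if $f$ is a character of $\mathbb{Q}_p$, and, choosing $f_1=f_4\equiv 1$, equation~(\ref{11.1}) reduces, exactly as in the passage leading to~(\ref{08_04_10}), to the single requirement $f(a_2y)f(a_3y)\equiv 1$. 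Thus it suffices to exhibit distributions $\mu,\nu$ on $\mathbb{Q}_p$ with nonvanishing characteristic functions such that $f=\hat\nu/\hat\mu$ is not a character while $f(a_2y)f(a_3y)\equiv 1$.

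The key idea is to look for $f$ that is radial (depends on $y$ only through $|y|_p$) and takes values in $\{-1,1\}$. Since $|a_2|_p=|a_3|_p$, for such an $f$ we have $f(a_2y)=f(a_3y)$, whence $f(a_2y)f(a_3y)=f(a_2y)^2\equiv 1$ automatically. Moreover, a non-constant continuous $\{-1,1\}$-valued function on $\mathbb{Q}_p$ can never be a character: every character $(t,\cdot)$ satisfies $(t,y)=(t,y/2)^2$, so a character with values in $\{-1,1\}$ is identically $1$. This is precisely the point at which the argument for $\mathbb{R}$ breaks down, since $\mathbb{R}$ is connected and admits no non-constant continuous $\{-1,1\}$-valued function at all; it also explains why, in contrast to the real line, even the case $a_2=a_3$ fails over $\mathbb{Q}_p$. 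Hence any non-constant radial $\{-1,1\}$-valued $f$ of the above form yields a counterexample, provided it is realizable as a ratio of two nonvanishing characteristic functions.

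To realize such an $f$, I would take $\mu=sE_0+(1-s)\sigma$, where $E_0$ is the degenerate distribution at $0$, $\sigma$ is the normalized Haar measure on the sphere $\{x:|x|_p=1\}=\mathbb{Z}_p^\times$, and $0<s<1/p$. A direct computation gives $\hat\sigma(y)=1$ for $|y|_p\le 1$, $\hat\sigma(y)=-1/(p-1)$ for $|y|_p=p$, and $\hat\sigma(y)=0$ for $|y|_p\ge p^2$, so that $\hat\mu(y)$ equals $1$ on $\mathbb{Z}_p$, equals $(sp-1)/(p-1)<0$ on the sphere $\{|y|_p=p\}$, and equals $s>0$ for $|y|_p\ge p^2$; in particular $\hat\mu$ is nonvanishing and is negative exactly on $\{|y|_p=p\}$. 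I then let $\nu$ be the radial distribution whose characteristic function equals $|\hat\mu|$, that is, $\hat\nu$ agrees with $\hat\mu$ off the sphere $\{|y|_p=p\}$ and has the opposite sign on it. Then $\hat\nu$ is again nonvanishing and $f=\hat\nu/\hat\mu$ is the radial function equal to $1$ everywhere except $-1$ on $\{|y|_p=p\}$, which is non-constant, hence not a character.

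The main obstacle is to verify that $\nu$ is a genuine probability measure. I would handle this by inverting the radial transform: writing $Q_\ell=\nu(\{|x|_p\le p^\ell\})$ and denoting by $\phi(m)$ the value of $\hat\nu$ on $\{|y|_p=p^m\}$, one checks the relation $\phi(-\ell)=(pQ_\ell-Q_{\ell+1})/(p-1)$, which gives the recursion $Q_\ell=(Q_{\ell+1}+(p-1)\phi(-\ell))/p$. Feeding in the prescribed values of $\phi$ together with the boundary conditions $Q_\ell\to 1$ as $\ell\to+\infty$ and $Q_\ell\to s$ as $\ell\to-\infty$, one solves the recursion explicitly and verifies that every sphere mass $Q_\ell-Q_{\ell-1}$ is nonnegative; this is exactly where the choice $0<s<1/p$ is used. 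The resulting $\nu$ is an atom at $0$ of mass $s$ together with uniform measures on the spheres $\{|x|_p=p^\ell\}$, $\ell\le 0$. With this $\mu$ and $\nu$ the random variables $\xi_2,\xi_3$ are identically distributed, the vectors $(L_1,L_2)$ and $(M_1,M_2)$ are identically distributed, yet $\nu$ is not a shift of $\mu$, which proves the proposition.
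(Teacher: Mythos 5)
Your proof is correct, and the core mechanism is the same as the paper's: build a pair $\mu,\nu$ whose ratio $f=\hat\nu/\hat\mu$ is a non-constant $\{-1,1\}$-valued function depending only on $|y|_p$, so that $|a_2|_p=|a_3|_p$ forces $f(a_2y)f(a_3y)=f(a_2y)^2\equiv 1$ while $f$ cannot be a character. Where you genuinely differ is in how the pair $(\mu,\nu)$ is produced. The paper works on the compact subgroup $\mathbb{Z}_p\subset\mathbb{Q}_p$ and uses a soft Fourier-series argument: it takes symmetric coefficients $\alpha(h)$ on the (discrete) character group with $\alpha(0)=1$ and $\sum_h|\alpha(h)|<2$, so that $\rho(g)=\sum_h\alpha(h)\overline{(g,h)}$ is automatically a nonnegative density with respect to Haar measure, and then flips the sign of every coefficient with $h\ne 0$; nonnegativity of the flipped density is again immediate from the triangle inequality, and the resulting $f$ equals $1$ on the (open, norm-determined) annihilator of $\mathbb{Z}_p$ and $-1$ off it. You instead construct explicit radial measures on $\mathbb{Q}_p$ ($\mu=sE_0+(1-s)\sigma$ with $\sigma$ uniform on $\mathbb{Z}_p^\times$, $\hat\nu=|\hat\mu|$) and must then verify by hand that $\hat\nu$ is positive definite; this is the one step you only sketch, but it does close: one can check your recursion, or more directly write $\nu=\mu+\frac{2(1-sp)}{p-1}\bigl(m_{p\mathbb{Z}_p}-m_{\mathbb{Z}_p}\bigr)$, where $m_K$ denotes normalized Haar measure on the compact group $K$, and observe that the density of this measure on $\mathbb{Z}_p^\times$ is $\frac{p+sp-2}{p-1}\ge 0$ and on $p\mathbb{Z}_p$ is $2(1-sp)>0$ precisely because $0<s<1/p$. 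What each approach buys: the paper's construction costs nothing in positivity checks and is reused verbatim for $\mathbb{Z}(p)$ in its Proposition 4.2, since it works on any second countable compact Abelian group; your construction is more concrete, stays entirely inside $\mathbb{Q}_p$, and your argument that a $\{-1,1\}$-valued character is trivial (via $(t,y)=(t,y/2)^2$) is cleaner and more explicit than the paper's corresponding remark, which also makes transparent why the case $a_2=a_3$, which works over $\mathbb{R}$, must fail over $\mathbb{Q}_p$.
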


\begin{proof}
Let $G$ be a second countable compact Abelian group, $H$ be 
its character group. 
 Then $H$ is a countable discrete Abelian group. 
  Let $\alpha(h)$ be a real-valued nonvanishing function
on the group $H$ satisfying the conditions:
\renewcommand{\labelenumi}{\rm(\roman{enumi})}
\begin{enumerate} 
\item
 $\alpha(0)=1$;
\item
$\alpha(-h)= \alpha(h)$ for all $h\in H$; 
\item
$\sum\limits_{h\in H}|\alpha(h)|<2$; 
\end{enumerate}
Consider on the group $G$ the function
$$
\rho(g)=\sum\limits_{h\in H}\alpha(h)\overline{(g, h)}, \quad g\in G.
$$
It follows from (i)--(iii) that $\rho(g)$ 
is the nonnegative density with respect to 
the Haar distribution on $G$ of  a distribution $\mu$
on the group $G$ with the characteristic function $\hat\mu(h)=\alpha(h)$.

Put
$$
\beta(h)=
\begin{cases}
1& \text{\ if\ }\ \ h=0,
\\-\alpha(h)& \text{\ if\ }\ \ h\ne 0.
\end{cases}
$$
Then $\beta(h)$ is a real-valued nonvanishing function on the group $H$. 
Obviously,  the function $\beta(h)$ also
satisfies conditions (i)--(iii).  Hence there is
 a distribution $\nu$
on the group $G$ with the characteristic function $\hat\nu(h)=\beta(h)$.
It is easy to see that 
if $G$ is not isomorphic to the additive group of the 
integers modulo 2, then $\nu$ 
is not a shift of $\mu$.  

Let $G=\mathbb{Z}_p$ be the ring of $p$-adic integers. 
Then 
$\mathbb{Z}_p$ is a compact subgroup of the group $\mathbb{Q}_p$. 
Let $\mu$ and $\nu$ be
the distributions on $\mathbb{Z}_p$ constructed above.
Let $\xi_1$,
 $\xi_2$,  $\xi_3$, and $\xi_4$  
be independent random variables with values in $\mathbb{Z}_p$ such that
$\xi_2$ and  $\xi_3$ are identically distributed. 
Assume that the random variable  
$\xi_j$  
   has the
distribution  $\mu_j$, $j=1, 2, 3, 4$, where $\mu_1$ and
$\mu_4$ are arbitrary distributions on $\mathbb{Z}_p$ with 
  nonvanishing characteristic functions and $\mu_2=\mu_3=\mu$.
Consider the linear forms $L_1=\xi_1+a_2\xi_2+a_3\xi_3$ and 
$L_2=b_2\xi_2+b_3\xi_3+\xi_4$. 
Let $\eta_1$,
 $\eta_2$,  $\eta_3$, and $\eta_4$  
be independent random variables with values in $\mathbb{Z}_p$ such that
$\eta_2$ and  $\eta_3$ are identically distributed. 
Suppose that $\eta_j$  
   has the
distribution  $\nu_j$, $j=1, 2, 3, 4$, where $\nu_1=\mu_1$, 
$\nu_2=\nu_3=\nu$,  and $\nu_4=\mu_4$.   
Put $M_1=\eta_1+a_2\eta_2+a_2\eta_3$ and 
$M_2=b_2\eta_2+b_2\eta_3+\eta_4$. 

Consider  $\xi_j$ and $\eta_j$ as 
independent random variables with values in the field $\mathbb{Q}_p$
and
 verify that the 
characteristic functions of the random vectors $(L_1, L_2)$ 
and $(M_1, M_2)$
coincide. This is equivalent to the fact that the functions 
$f_j(y)=\hat\nu_j(y)/\hat\mu_j(y)$ satisfy  equation 
(\ref{11.1}), where $f_2(y)=f_3(y)=f(y)$. 
Taking into account that
$f_1(y)=f_4(y)=1$ for all $y\in \mathbb{Q}_p$, equation (\ref{11.1})
takes the form
\begin{equation*} 
f(a_2(u+cv))
f(a_3(u+cv))=1, \quad u, v\in \mathbb{Q}_p,
\end{equation*}
where $c=b_2/a_2=b_3/a_3$, or
\begin{equation} \label{08_04_15}
f(a_2y)
f(a_3y)=1, \quad y\in \mathbb{Q}_p.
\end{equation}
We consider the distributions $\mu$ and $\nu$ as distributions on
$\mathbb{Q}_p$. It is easy to see that then the function $f(y)$ is 
of the form
\begin{equation}\label{9.4}
f(y)=
\begin{cases}
1& \text{\ if\ }\ \ y\in p\mathbb{Z}_p,
\\-1& \text{\ if\ }\ \ y\notin p\mathbb{Z}_p.
\end{cases}
\end{equation} 
It  follows from (\ref{9.4}) and the condition 
$|a_2|_p=|a_3|_p$ that the function $f(y)$ satisfies equation (\ref{08_04_15}). Thus,  the 
characteristic functions of the random vectors $(L_1, L_2)$ 
and $(M_1, M_2)$
coincide. Hence  the random vectors $(L_1, L_2)$ 
and $(M_1, M_2)$ are identically distributed, while $\nu$ 
is not a shift of $\mu$.  

We note that if $|a_2|_p\ne|a_3|_p$, then the function $f(y)$ 
does not satisfy equation (\ref{08_04_15}), i.e., 
the random vectors $(L_1, L_2)$ 
and $(M_1, M_2)$ are not identically distributed.
\end{proof}

The condition $|a_2|_p=|a_3|_p$ is an analogue for the field
$\mathbb{Q}_p$ of the condition $|a_2| =|a_3| $ for the field
$\mathbb{R}$. Comparing  the statement of Theorem \ref{th3} in the case
 when condition
(III) holds  and  Proposition \ref{pr1}, we see that in the field
$\mathbb{Q}_p$
do not exist nonzero elements $a_j$, $b_j$, $j=2,3$,   
 such that $a_2b_3=a_3b_2$ and $|a_2|_p=|a_3|_p$ and the distribution of 
the random vector $(L_1, L_2)$  determines 
  the 
distributions of the random variables 
$\xi_2$ and   $\xi_3$ up to shift.

\section { Random
variables with values in the field of integers modulo $p$, where $p\ne 2$,  and 
in the discrete field of rational numbers}

Let $p$ be a prime number and  $\mathbb{Z}(p)$ be the field of 
integers modulo $p$. When we say the group $\mathbb{Z}(p)$, we 
mean the additive group of the field $\mathbb{Z}(p)$, i.e., 
the additive group of the integers modulo $p$. 
The character group of the group $\mathbb{Z}(p)$ is
isomorphic to  $\mathbb{Z}(p)$. 
Multiplication by a nonzero element of $\mathbb{Z}(p)$ is an  
automorphism of the group $\mathbb{Z}(p)$. Note that $(ax, y)=(x, ay)$ 
for all $a, x, y\in \mathbb{Z}(p)$. If $\mu$ is a distribution
on $\mathbb{Z}(p)$, the characteristic function $\hat\mu(y)$ is defined by
formula (\ref{e1}), where $X=Y=\mathbb{Z}(p)$. The group $\mathbb{Z}(p)$ 
is finite and hence compact. 

In this section, we prove  
analogues of  the Li--Zheng theorem for the field of integers 
modulo $p$, where $p\ne 2$, and for the discrete field of 
rational numbers $\mathbb{Q}$.

\begin{theorem}\label{th4}  Consider the field $\mathbb{Z}(p)$, where $p\ne 2$.
  Let $\xi_1$,  $\xi_2$,  $\xi_3$, and 
$\xi_4$ be independent
random variables with  values in $\mathbb{Z}(p)$  with
nonvanishing characteristic functions. Let $a_j$, $b_j$, $j=2,3$,   
be nonzero 
elements of $\mathbb{Z}(p)$. Consider the linear forms 
$L_1=\xi_1+a_2\xi_2+a_3\xi_3$ and $L_2=b_2\xi_2+b_3\xi_3+\xi_4$. 
Then the following statements hold.
\renewcommand{\labelenumi}{\rm\arabic{enumi}.}
\begin{enumerate} 
\item
If $a_2b_3\ne a_3b_2$, then the distribution of  
the random vector $(L_1, L_2)$  determines 
  the 
distributions of the random variables
$\xi_j$, $j=1, 2, 3, 4$, up to shift.
\item
If $a_2b_3=a_3b_2$, then the distribution of  
the random vector $(L_1, L_2)$  determines 
  the 
distributions of the random variables
$\xi_1$  and $\xi_4$ up to shift.
\end{enumerate}
\end{theorem}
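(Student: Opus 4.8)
The plan is to follow the scheme of the proofs of Theorems \ref{th1} and \ref{th2}. I would take independent random variables $\eta_1, \eta_2, \eta_3, \eta_4$ with values in $\mathbb{Z}(p)$ and nonvanishing characteristic functions, set $M_1 = \eta_1 + a_2\eta_2 + a_3\eta_3$, $M_2 = b_2\eta_2 + b_3\eta_3 + \eta_4$, and assume that $(L_1, L_2)$ and $(M_1, M_2)$ are identically distributed. Denoting by $\hat\mu_j$, $\hat\nu_j$ the characteristic functions of $\xi_j$, $\eta_j$, using $\hat\mu(y) = \textbf{E}[(\xi, y)]$ together with $(ax, y) = (x, ay)$, and setting $f_j(y) = \hat\nu_j(y)/\hat\mu_j(y)$, I arrive exactly as before at
\begin{equation*}
f_1(u)f_2(a_2u + b_2v)f_3(a_3u + b_3v)f_4(v) = 1, \quad u, v \in \mathbb{Z}(p).
\end{equation*}
Since $\mathbb{Z}(p)$ is finite and its character group is isomorphic to $\mathbb{Z}(p)$, it suffices to prove that the relevant $f_j$ are characters: then $\hat\nu_j = \hat\mu_j\,(x_j, \cdot)$ for some $x_j$, which means $\nu_j$ is a shift of $\mu_j$.

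For statement 1, with $a_2b_3 \neq a_3b_2$, I would first use that multiplication by each of the nonzero elements $a_2, a_3$ is an automorphism of $\mathbb{Z}(p)$ to replace $\xi_2, \xi_3$ and $\eta_2, \eta_3$ by their rescalings, reducing to $L_1 = \xi_1 + \xi_2 + \xi_3$ with $b_2 \neq b_3$, exactly as in item 1 of the proof of Theorem \ref{th2}; no assumption on identical distribution of $\xi_2, \xi_3$ is needed. Putting $\theta_j(y) = \ln|f_j(y)|$ and applying the finite difference operator three times, with the shifts governed by $b_2 - b_3 \neq 0$, $b_2 \neq 0$, and an arbitrary $h$, I obtain $\Delta_h^3\theta_2 = 0$, so $\theta_2$ is a continuous polynomial on $\mathbb{Z}(p)$. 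As $\mathbb{Z}(p)$ is finite, all its elements are compact, and Lemma \ref{le2} forces $\theta_2$ to be constant; since $\theta_2(0) = 0$ this gives $|f_2| \equiv 1$, and the same argument gives $|f_3| \equiv 1$.

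The second half of statement 1 is where the hypothesis $p \neq 2$ is decisive, and I expect this to be the main point. Following the method of Theorem 3.1 in \cite{F}, repeated finite differencing of the factored equation $f_2(u + b_2v)f_3(u + b_3v) = S(u)T(v)$, combined with $f_2(-y) = \overline{f_2(y)}$ and $|f_2| \equiv 1$, yields $f_2^2(u) = f_2(u+v)f_2(u-v)$, whence $f_2^2$ is a character and $f_2^2(y) = f_2(2y)$. To pass from here to the conclusion that $f_2$ is a character I need the mapping $y \mapsto 2y$ to be surjective on $\mathbb{Z}(p)$, that is, an automorphism; this holds precisely because $p \neq 2$, and it is exactly this step that collapses when $p = 2$, since then $f_2(2y) = f_2(0) = 1$ carries no information. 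The same reasoning applies to $f_3$; substituting the resulting characters into the functional equation and setting $v = 0$ and then $u = 0$ shows that $f_1$ and $f_4$ are characters as well, which proves statement 1.

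For statement 2, with $a_2b_3 = a_3b_2$, I would set $c = b_2/a_2 = b_3/a_3$ and $g(y) = f_2(a_2y)f_3(a_3y)$, rewriting the functional equation as $f_1(u)g(u + cv)f_4(v) = 1$. Setting $v = 0$ and then $u = 0$ shows that $g$ is multiplicative, so $\ln|g|$ is additive, hence a constant equal to zero by Lemma \ref{le2}, and $g$ is a character; the same two substitutions then force $f_1$ and $f_4$ to be characters, giving determination of the distributions of $\xi_1$ and $\xi_4$ up to shift. One cannot expect more here: unlike $\mathbb{Q}_p$, the field $\mathbb{Z}(p)$ carries no nontrivial valuation, so the separating condition $|a_2|_p \neq |a_3|_p$ of Theorem \ref{th2} has no counterpart, and the distributions of $\xi_2$ and $\xi_3$ need not be determined.
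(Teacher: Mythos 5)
Your proposal is correct and is essentially the paper's own proof: the paper disposes of both statements by observing that the arguments for Theorem \ref{th2} (case (I) for statement 1, item 2$a$ for statement 2) carry over verbatim to $\mathbb{Z}(p)$, resting on exactly the two properties you isolate, namely that all elements of $\mathbb{Z}(p)$ are compact (so Lemma \ref{le2} applies) and that $y\mapsto 2y$ is an automorphism precisely because $p\ne 2$. Your explicit unfolding of those arguments --- the reduction to $b_2\ne b_3$, the finite-difference step giving $|f_2|=|f_3|\equiv 1$, the passage from $f_2^2(y)=f_2(2y)$ to $f_2$ being a character (and your remark that this collapses when $p=2$), and the multiplicativity of $g(y)=f_2(a_2y)f_3(a_3y)$ in statement 2 --- matches the paper's reasoning in full.
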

\begin{proof}
1. Assume that $a_2b_3\ne a_3b_2$. The proof of   Theorem
 \ref{th2} in the case, when condition (I) holds, is based only on the following properties of the group $\mathbb{Q}_p$: the group $\mathbb{Q}_p$ consists of compact elements and the mapping $y\rightarrow 2y$ is a topological 
automorphism of the group  $\mathbb{Q}_p$. Both of these properties are 
also valid for the group $\mathbb{Z}(p)$, where $p\ne 2$. 
For this reason  the proof  
 remains unchanged if, instead of the field  $\mathbb{Q}_p$, 
 we consider the filed $\mathbb{Z}(p)$. Thus, statement 1 is valid.

2. Assume that $a_2b_3=a_3b_2$. The reasoning carried out 
in item 2$a$ of the proof of Theorem \ref{th2} is based only on the fact that
the group $\mathbb{Q}_p$ consists of compact elements. 
For this reason the proof remains unchanged if, instead of the field  
$\mathbb{Q}_p$,  we consider the filed $\mathbb{Z}(p)$. Thus, statement 
2 is valid.
\end{proof}

Theorem \ref{th4} can not be strengthened. Namely, the following statement is true.
\begin{proposition}\label{pr2}
Let   $a_j$, $b_j$, $j=2,3$,   
be nonzero 
elements of the field $\mathbb{Z}(p)$, where $p\ne 2$,  
such that $a_2b_3=a_3b_2$. Then there are independent random variables 
$\xi_1$, $\xi_2$,  $\xi_3$, and $\xi_4$  
  with values in $\mathbb{Z}(p)$ with nonvanishing 
  characteristic functions such that the following is true:
\renewcommand{\labelenumi}{\rm(\Roman{enumi})}
\begin{enumerate} 
\item
the random variables $\xi_2$ and  $\xi_3$ are identically distributed; 
 \item
the distribution 
of the random vector $(L_1, L_2)$, where $L_1=\xi_1+a_2\xi_2+a_3\xi_3$ and $L_2=b_2\xi_2+b_3\xi_3+\xi_4$,  need not necessarily 
determine the distribution of the random variables
$\xi_2$ and $\xi_3$ up to  shift. 
\end{enumerate}
\end{proposition}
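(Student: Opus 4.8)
The plan is to specialize the general compact-group construction from the proof of Proposition \ref{pr1} to the finite field $\mathbb{Z}(p)$, which here plays the role of its own compact group $G$. Since $\mathbb{Z}(p)$ is finite it is compact, and its character group $H$ is again isomorphic to $\mathbb{Z}(p)$, a finite discrete group of order $p$. First I would produce, exactly as in Proposition \ref{pr1}, a real-valued nonvanishing function $\alpha$ on $H$ satisfying conditions (i)--(iii): with $\alpha(0)=1$ fixed, it suffices to assign the remaining $p-1$ values symmetrically under $h\mapsto -h$ and small enough that $\sum_{h\ne 0}|\alpha(h)|<1$. As in Proposition \ref{pr1}, this yields a strictly positive density on $\mathbb{Z}(p)$, hence a distribution $\mu$ with $\hat\mu=\alpha$, together with the companion distribution $\nu$ whose characteristic function $\beta$ coincides with $\alpha$ except that $\beta(h)=-\alpha(h)$ for $h\ne 0$. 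Because $p\ne 2$, the group $\mathbb{Z}(p)$ is not isomorphic to $\mathbb{Z}(2)$, so the argument of Proposition \ref{pr1} shows that $\nu$ is not a shift of $\mu$.

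Next I would set $\mu_2=\mu_3=\mu$ and $\nu_2=\nu_3=\nu$ (so that $\xi_2,\xi_3$ and likewise $\eta_2,\eta_3$ are identically distributed), and choose $\mu_1=\nu_1$, $\mu_4=\nu_4$ to be arbitrary distributions on $\mathbb{Z}(p)$ with nonvanishing characteristic functions. Setting $f_j=\hat\nu_j/\hat\mu_j$ then forces $f_1\equiv f_4\equiv 1$ and $f_2=f_3=f$, where $f=\beta/\alpha$ takes the value $1$ at the zero element and $-1$ at every nonzero element of $\mathbb{Z}(p)$. Since $a_2b_3=a_3b_2$, putting $c=b_2/a_2=b_3/a_3$ collapses equation (\ref{11.1}) to the single identity $f(a_2y)f(a_3y)=1$, $y\in\mathbb{Z}(p)$.

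It remains to verify this identity, and here the finite-field setting makes the verification strictly easier than in the $\mathbb{Q}_p$ case of Proposition \ref{pr1}. As $a_2,a_3$ are nonzero, multiplication by each is an automorphism of $\mathbb{Z}(p)$, whence $a_2y=0\iff y=0\iff a_3y=0$. Therefore $f(a_2y)=f(a_3y)$ for every $y$, both being equal to $1$ when $y=0$ and to $-1$ otherwise, so their product is identically $1$. Consequently the characteristic functions of $(L_1,L_2)$ and of the analogous vector $(M_1,M_2)$ built from the $\eta_j$ coincide, so these vectors are identically distributed while $\nu$ is not a shift of $\mu$, which is exactly the asserted failure of determinacy.

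I expect no genuine obstacle. The one point worth emphasizing is that, in contrast to Proposition \ref{pr1}, no hypothesis analogous to $|a_2|_p=|a_3|_p$ is required: in $\mathbb{Z}(p)$ the function $f$ distinguishes only the zero element from the nonzero ones, and every nonzero multiplier preserves that dichotomy, so the counterexample goes through for all admissible coefficients satisfying $a_2b_3=a_3b_2$.
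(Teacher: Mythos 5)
Your proposal is correct and follows essentially the same route as the paper: the paper's proof of Proposition \ref{pr2} likewise takes $G=\mathbb{Z}(p)$ in the construction of Proposition \ref{pr1}, obtains the function $f$ equal to $1$ at $0$ and $-1$ at nonzero elements, and reduces everything to the identity $f(a_2y)f(a_3y)=1$, which holds because multiplication by the nonzero elements $a_2$, $a_3$ preserves the dichotomy $y=0$ versus $y\ne 0$. Your write-up simply makes explicit the details (including why $p\ne 2$ is needed for $\nu$ not to be a shift of $\mu$, and why no analogue of the condition $|a_2|_p=|a_3|_p$ arises) that the paper compresses into a reference to Proposition \ref{pr1}.
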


\begin{proof} 
To prove the proposition, we argue for the field 
$\mathbb{Z}(p)$ in the same
way as in Proposition \ref{pr1} we argued for the field $\mathbb{Q}_p$
and keep the same notation.
The only difference is that instead of (\ref{9.4}) the function $f(y)$ is 
of the form
\begin{equation}\label{08_04_4}
f(y)=
\begin{cases}
1& \text{\ if\ }\ \ y=0,
\\-1& \text{\ if\ }\ \ y\ne 0,
\end{cases}
\end{equation} 
because we construct the distributions $\mu$ and $\nu$ at once on $\mathbb{Z}(p)$.
The proposition will be proved if we verify that that the function $f(y)$ 
satisfies the equation 
$$
f(a_2y)f(a_3y)=1, \quad y\in \mathbb{Z}(p).
$$
  In view of
(\ref{08_04_4}), it is obvious.
\end{proof}

Proposition \ref{pr2} shows that, unlike Theorems \ref{th3} 
and \ref{th2}, in the field
$\mathbb{Z}(p)$ do not exist nonzero elements $a_j$, $b_j$, $j=2,3$,   
 such that $a_2b_3=a_3b_2$ and the distribution of 
the random vector $(L_1, L_2)$  determines 
  the 
distributions of the random variables 
$\xi_2$ and up $\xi_3$ to shift. 
  
\medskip

Let $\mathbb{Q}$ be the field of rational  numbers considering in the discrete
topology. When we say the group $\mathbb{Q}$, we mean the additive 
group of the field $\mathbb{Q}$.
The character group of the group $\mathbb{Q}$ is
topologically isomorphic to  the $\bm a$-adic solenoid
$\Sigma_{\bm a}$, where ${\bm a}=(2, 3, 4, \dots)$ (\!\!\cite[(25.4)]{HeRo1}). 
The group $\Sigma_{\bm a}$ is compact. 
Since the multiplication by
any nonzero integer is an automorphism of the group $\mathbb{Q}$,
the multiplication by
any nonzero integer is a topological automorphism of the group
$\Sigma_{\bm a}$.
Therefore, the multiplication by any nonzero rational number is 
well-defined and is also a topological automorphism in the group $\Sigma_{\bm a}$.
 Note that $(ax, y)=(x, ay)$ 
for all $a, x \in \mathbb{Q}$, $y\in \Sigma_{\bm a}$. If $\mu$ is a distribution
on the group $\mathbb{Q}$, the characteristic function $\hat\mu(y)$ is defined by
formula (\ref{e1}), where $X=\mathbb{Q}$, $Y=\Sigma_{\bm a}$. 
The set $\mathbb{Q}$ is a countable subset of $\mathbb{R}$. For this reason, 
if a random variable $\xi$
take values in $\mathbb{Q}$ we can consider $\xi$ as a random variable 
with values in $\mathbb{R}$. This implies, in particular, that Theorem \ref{th3}
is valid for the field $\mathbb{Q}$, i.e.,
when $\xi_j$ take values in $\mathbb{Q}$ and 
$a_j, b_j\in\mathbb{Q}$. However, the fact that random variables $\xi_j$ 
can be considered as random variables taking values in the discrete 
field $\mathbb{Q}$ allows us to strengthen Theorem \ref{th3}
for the field $\mathbb{Q}$.
\begin{theorem}\label{th5}
Let $\xi_1$,  $\xi_2$,  $\xi_3$, and 
$\xi_4$ be independent
random variables with  values in the field $\mathbb{Q}$ with
nonvanishing characteristic functions. Let $a_j$, $b_j$, $j=2,3$,   
be nonzero 
elements of $\mathbb{Q}$. Consider the linear forms 
$L_1=\xi_1+a_2\xi_2+a_3\xi_3$ and $L_2=b_2\xi_2+b_3\xi_3+\xi_4$. 
Assume that one of the following conditions holds:
\renewcommand{\labelenumi}{\rm(\Roman{enumi})}
\begin{enumerate} 
\item
 $a_2b_3\ne a_3b_2$; 
 \item
$a_2b_3=a_3b_2$, $|a_2| \ne |a_3|$, and   the random variables
$\xi_2$ and  $\xi_3$ are identically  distributed;
\item
$a_2b_3=a_3b_2$, $a_2=a_3$, and   the random variables
$\xi_2$ and  $\xi_3$ are identically  distributed.
\end{enumerate} 
Then  the distribution of 
the random vector $(L_1, L_2)$  determines 
  the 
distributions of the random variables 
$\xi_j$, $j=1, 2, 3, 4$, up to shift. 
\end{theorem}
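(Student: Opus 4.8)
The plan is to reduce the equality of the distributions of $(L_1, L_2)$ and $(M_1, M_2)$ to a single functional equation on the character group $\Sigma_{\bm a}$ of the discrete group $\mathbb{Q}$ and then to treat the two regimes $a_2b_3 \ne a_3b_2$ and $a_2b_3 = a_3b_2$ by entirely different tools. First I would introduce independent $\mathbb{Q}$-valued $\eta_j$ with nonvanishing characteristic functions, set $M_1 = \eta_1 + a_2\eta_2 + a_3\eta_3$ and $M_2 = b_2\eta_2 + b_3\eta_3 + \eta_4$, and assume $(L_1, L_2)$ and $(M_1, M_2)$ are identically distributed. Since the characteristic function of a $\mathbb{Q}$-valued variable is a function on $\Sigma_{\bm a}$ and $(ax, y) = (x, ay)$ for $a, x \in \mathbb{Q}$, $y \in \Sigma_{\bm a}$, the computation of item 1 of the proof of Theorem \ref{th1} gives, with $f_j(y) = \hat\nu_j(y)/\hat\mu_j(y)$,
$$f_1(u)f_2(a_2u + b_2v)f_3(a_3u + b_3v)f_4(v) = 1, \quad u, v \in \Sigma_{\bm a}.$$

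For condition (I) I would observe that item 1 of the proof of Theorem \ref{th2} uses only two features of $\mathbb{Q}_p$: it consists of compact elements, so that Lemma \ref{le2} renders every continuous polynomial constant, and $y \mapsto 2y$ is a topological automorphism. The solenoid $\Sigma_{\bm a}$ is compact and multiplication by any nonzero rational, in particular by $2$, is a topological automorphism, so both features persist. Because (I) does not require $\xi_2$ and $\xi_3$ to be identically distributed, I may replace $\xi_2, \xi_3$ by $a_2\xi_2, a_3\xi_3$ and reduce to $a_2 = a_3 = 1$, $b_2 \ne b_3$; then the two-stage argument of Theorem \ref{th2} applies verbatim, first giving $|f_2(y)| = |f_3(y)| = 1$ by the finite-difference method and Lemma \ref{le2}, then showing that $f_2, f_3$, and hence $f_1, f_4$, are characters of $\Sigma_{\bm a}$, so every $\mu_j$ is determined up to shift. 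The point worth stressing is why the extra hypothesis $a_2b_2 \ne -a_3b_3$ of Theorem \ref{th3} becomes superfluous: on the compact dual no quadratic term can survive, since Lemma \ref{le2} forces continuous polynomials to be constant, so there is nothing left to cancel, which is precisely the strengthening this theorem records.

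For conditions (II) and (III) I would not touch the solenoid at all. Their hypotheses ($a_2b_3 = a_3b_2$, the variables $\xi_2$ and $\xi_3$ identically distributed, and either $|a_2| \ne |a_3|$ or $a_2 = a_3$) coincide word for word with conditions (II) and (III) of Theorem \ref{th3}, and since $\mathbb{Q} \subset \mathbb{R}$ the $\xi_j$ are in particular real-valued, so Theorem \ref{th3} applies directly and yields each $\mu_j$ up to a real shift $\alpha_j$; as $\mu_j$ and $\nu_j = \mu_j * E_{\alpha_j}$ are both supported in $\mathbb{Q}$, any support point $q$ of $\mu_j$ gives $q + \alpha_j \in \mathbb{Q}$, whence $\alpha_j \in \mathbb{Q}$ and the shift takes place within $\mathbb{Q}$.

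The main obstacle is conceptual rather than computational: recognizing that the theorem splits into two halves requiring genuinely different machinery. The natural temptation is to imitate Theorem \ref{th2} throughout, but its case (II) rests on the iteration $\theta(y) = (-1)^n\theta(k^ny)$ driven by $|k|_p < 1$, i.e., on the nonarchimedean condition $|a_2|_p \ne |a_3|_p$, whereas the hypothesis available here is the archimedean $|a_2| \ne |a_3|$; these conditions are different, and the $p$-adic convergence $k^ny \to 0$ underlying that iteration has no counterpart on the solenoid for the archimedean norm. This mismatch is exactly why the archimedean cases (II) and (III) must be routed through the real embedding and Theorem \ref{th3}, while the compact-dual argument is reserved for (I), where it purchases the removal of $a_2b_2 \ne -a_3b_3$.
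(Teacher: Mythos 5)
Your proposal is correct and follows essentially the same route as the paper: condition (I) is handled by transplanting the proof of Theorem \ref{th2} for its condition (I) to the compact dual $\Sigma_{\bm a}$, using exactly the two properties you isolate (compact elements plus the doubling automorphism), while conditions (II) and (III) are reduced to Theorem \ref{th3} via the embedding $\mathbb{Q}\subset\mathbb{R}$. Your added verification that the shift $\alpha_j$ is in fact rational is a small detail the paper leaves implicit.
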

\begin{proof}
Assume that condition (I) holds. The proof of Theorem
 \ref{th2} in the case, when condition (I) holds, is based only on the following properties of the group $\mathbb{Q}_p$: the group $\mathbb{Q}_p$ consists of compact elements and the mapping $y\rightarrow 2y$ is a topological 
automorphism of the group  $\mathbb{Q}_p$. 
Both of these properties are 
also valid for the group $\Sigma_{\bm a}$, where ${\bm a}=(2, 3, 4, \dots)$. 
For this reason  the proof  
 remains unchanged if, instead of the field  $\mathbb{Q}_p$, 
 we consider the filed $\mathbb{Q}$.  
 
Note that if $a_2b_3\ne a_3b_2$, then it follows from Theorem \ref{th3}  
 that the random vector $(L_1, L_2)$  determines 
  the distributions of the random variables 
$\xi_j$, $j=1, 2, 3, 4$, up to shift, only if additional conditions are satisfied, namely $a_2b_2\ne -a_3b_3$ and the random variables
$\xi_2$ and  $\xi_3$ are identically  distributed.

In the case, when condition (II) or (III) is satisfied, the 
corresponding statements
follows   from 
the corresponding statements of Theorem \ref{th3} 
for the field $\mathbb{R}$.
\end{proof}

\begin{remark}\label{re2}
The distributions $\mu$ and $\nu$ constructed in Remark \ref{re1}
in fact are the distributions on the group of integers. We can
consider $\mu$ and $\nu$ as as distributions on $\mathbb{Q}$.
Hence Remark \ref{re1} is valid for the random variables 
with values in the field $\mathbb{Q}$ and it
shows that we can not strengthen Theorem \ref{th5} 
and replace conditions (II) and (III) 
in Theorem \ref{th5} by the condition $a_2b_3=a_3b_2$.
\end{remark} 

In Theorems \ref{th3}, \ref{th2}, \ref{th4}, and \ref{th5}  
independent random variables take values in 
a locally compact field, namely in $\mathbb{R}$, $\mathbb{Q}_p$,   
$\mathbb{Z}(p)$, and $\mathbb{Q}$. In doing so, coefficients of the linear 
forms are
elements of the field. We can also study a more general 
problem, when independent random variables take values in 
a locally compact Abelian group $X$, and coefficients of the forms
are continuous endomorphisms of $X$.
Taking this into account, we formulate the following problem. 

\textit{Let $X$ be a second countable locally compact Abelian group. 
Let $\xi_1$,  $\xi_2$,  $\xi_3$, and $\xi_4$ be independent
random variables with  values in $X$ with
nonvanishing characteristic functions. Let $a_j$, $b_j$, $j=2,3$,   
be continuous endomorphisms of the group $X$. Consider the linear forms 
$L_1=\xi_1+a_2\xi_2+a_3\xi_3$ and $L_2=b_2\xi_2+b_3\xi_3+\xi_4$.
What are the conditions on $a_j$, $b_j$, and $\xi_j$ to guarantee that
  the distribution of 
the random vector $(L_1, L_2)$ determines 
  the 
distributions of the random variables 
$\xi_j$, $j=1, 2, 3, 4$, up to  shift}?

It follows from the results of the article that these  conditions depend  
on the group $X$.

A more general problem can also be formulated. 

\textit{Let us assume that we know the distribution of the random vector $(L_1, L_2)$. 
How uniquely does this distribution determine the distributions of the random variables
$\xi_j$, $j=1, 2, 3, 4$}?

In connection with this problem, we note that in the case when $X$ is an
arbitrary $\bm a$-adic solenoid $\Sigma_{\bm a}$, it follows from Theorem 3.1 
in \cite{F}, see also  \cite[Theorem 15.8]{F1}, that if 
$L_1=\xi_1+\xi_2+\xi_3$ and $b_2$, $b_3$, and $b_2-b_3$ are topological automorphisms of 
the group $\Sigma_{\bm a}$, then the distribution of 
the random vector $(L_1, L_2)$ determines 
 the 
distributions of the random variables 
$\xi_j$, $j=1, 2, 3, 4$, up to convolution with a Gaussian 
distribution on $\Sigma_{\bm a}$.

\medskip

\noindent{\bf Acknowledgements} 

\medskip

\noindent This article was written during the author's stay at 
the Department of Mathematics University of Toronto as a Visiting Professor. 
I am very grateful to Ilia Binder for his invitation and support.

\medskip

\noindent\textbf{Funding} The author  declares that no funds, grants, 
or other support were
received during the preparation of this manuscript.

\medskip

\noindent\textbf{Data Availability Statement} Data sharing 
not applicable to this article as no datasets were
generated or analysed during the current study.

\noindent B. Verkin Institute for Low Temperature Physics and Engineering\\
of the National Academy of Sciences of Ukraine\\
47, Nauky ave, Kharkiv, 61103, Ukraine

\medskip

\noindent Department of Mathematics  
University of Toronto \\
40 St. George Street
Toronto, ON,  M5S 2E4
Canada 

\medskip

\noindent e-mail:    gennadiy\_f@yahoo.co.uk

\end{document}